\newcommand{\be}{\begin{enumerate}}
\newcommand{\ee}{\end{enumerate}}
\newcommand{\bc}{\begin{center}}
\newcommand{\ec}{\end{center}}
\newcommand{\beqn}{\begin{eqnarray}}
\newcommand{\eeqn}{\end{eqnarray}}
\newcommand{\bequ}{\begin{equation}}
\newcommand{\eequ}{\end{equation}}
\newcommand{\tien}{\rightarrow}
\newcommand{\nn}{\nonumber}
\newtheorem{teo}{Theorem}[section]
\newtheorem{cor}[teo]{Corollary}
\newtheorem{lem}[teo]{Lemma}
\newtheorem{defi}{Definition}[section]
\newtheorem{prop}[teo]{Proposition}
\newtheorem{rem}[teo]{Remark}
\newcommand{\ldos}{L^{2}(\mathbb{R}^n)}
\newcommand{\linf}{L^{\infty}(\mathbb{R}^n)}
\newcommand{\lp}{L^{p}(\mathbb{R}^n)}
\newcommand{\lph}{L_{p}}
\newcommand{\Sch}{\mathcal{S}(\Rn)}
\newcommand{\Schp}{\mathcal{S}'(\Rn)}
\newcommand{\Fou}{\mathcal{F}}
\newcommand{\fou}{\widehat}
\newcommand{\fourier}{\mbox{Fourier}}
\newcommand{\gbeta}{\mathcal{G}^{\beta}}
\newcommand{\hrp}{H^{r,p}(\mathbb{R}^n)}
\newcommand{\Hsp}{\mathcal{H}^{s,p}(a)}
\newcommand{\Hspr}{\mathcal{H}^{s,p}_r(a)}
\newcommand{\Rn}{\mathbb{R}^n}
\newcommand{\tiend}{\rightarrow}
\title{Nonlinear pseudo-differential equations defined by elliptic symbols on ${\lp}$ and the fractional Laplacian}
\author{Mauricio Bravo, Humberto Prado\thanks{E-mail: humberto.prado@usach.cl} \; and
        Enrique G. Reyes\thanks{E-mail: e\_g\_reyes@yahoo.ca ; enrique.reyes@usach.cl} \\ \smallskip \\
Departamento de Matem\'{a}tica y Ciencia de la Computaci\'{o}n,\\
        Universidad de Santiago de Chile.\\
        Casilla 307 Correo 2, Santiago, Chile }
\date{\today}
\begin{document}

\maketitle

\begin{abstract}
We develop an $L^p(\mathbb{R}^n)$-functional calculus appropriated for interpreting ``non-classical symbols" of the
form $a(-\Delta)$, and for proving existence of solutions to nonlinear
pseudo-differential equations of the form $[1 + a(-\Delta)]^{s/2} (u) = V( \cdot , u)$. We
use the theory of Fourier multipliers for constructing suitable domains sitting inside $L^p(\mathbb{R}^n)$
on which the formal operator appearing in the above equation can be
rigorously defined, and we prove existence of solutions belonging to these domains.
We include applications of the theory to equations of physical interest involving the fractional
Laplace operator such as (generalizations of) the (focusing) Allen-Cahn, Benjamin-Ono and nonlinear Schr\"odinger equations.
\end{abstract}

\section{Introduction}
The aim of this work is to study scalar nonlinear equations of the form
\begin{equation} \label{abs}
[1 + a(-\Delta)]^{s/2} (u) = V( \cdot , u) \; ,
\end{equation}
in which $\Delta$ stands for the Laplace operator on $\mathbb R^n$ and the function $a(t)$, $t\geq 0$, is a
``non-classical symbol" of elliptic type,
see Definition 2.1 below. Two of the present authors, in collaboration with P. G\'orka, have developed a mathematical
framework which allows us to understand equations of the form (\ref{abs}) in the $L^2(\mathbb{R}^n)$ context, see
\cite{G-P-R,G-P-R5,G-P-R3}. In this article we consider these equations in the completely general context of Lebesgue
spaces. We show that, as is the case for classical elliptic equations, we can prove existence of solutions to
(\ref{abs}) with integrability and differentiability properties that go beyond the ones we would expect for
$L^p(\mathbb{R}^n)$ functions. We recall that equations depending on symbols $a(-\Delta)$ belong to a class of
pseudo-differential
equations which have been treated in a formal manner as ``equations in infinitely many derivatives" in the Physics
literature, see for instance \cite{Bar,Bar3,Bar4,Cal2,Cal,Vla}. These also include non-local equations, as observed
recently in \cite{CPR}, and equations of physical interest depending on fractional Laplace operators, see for instance
\cite{ABFS,va,va2,BV,CS1,CS2,FL,FLS,FJL,L,Sir}, the older paper \cite{Du}, and also \cite{KSM} for a {\em numerical} study
of the fractional nonlinear Schr\"odinger equation.

It is very important to stress the fact that the approach we followed in
\cite{G-P-R,G-P-R5,G-P-R3} does not apply in an obvious way to the  $L^p(\mathbb{R}^n)$ framework.
To give a basic example, we note that some of the arguments used in \cite{G-P-R,G-P-R5,G-P-R3} break
down because of the non-existence of a Plancherel theorem in $L^p(\mathbb{R}^n)$ (or, in other words
because the Fourier transform is not a unitary operator on $L^p(\mathbb{R}^n)\,$). Nevertheless, in
this paper we show that it is possible to develop alternative techniques to deal with equations such
as (\ref{abs}) in general Lebesgue spaces.

Our main tool is the use of Fourier multipliers after \cite{Ar,Gui,Ste}. These multipliers allow us to introduce a scale
of spaces $\mathcal{H}^{s,p}(a)$, sitting inside $L^p(\mathbb{R}^n)$, which are determined by the symbol $a$. Partially
motivated by Lions' classical paper \cite{Li}, we look at some embedding properties of the spaces $\mathcal{H}^{s,p}(a)$,
and we also consider their relation with fractional Sobolev
spaces. In particular, in the case of $L^p(\mathbb{R}^n)$ spaces of radial functions, we obtain a
``radial scale" of spaces $\mathcal{H}^{s,p}_r(a)$, see Section 4 below. Lions' results in \cite{Li} imply that
$\mathcal{H}^{s,p}_r(a)$ is compactly embedded into an appropriate radial space $L^q_r(\mathbb{R}^n)$.

We organize our work as follows. In Section 2 we introduce the class
$\mathcal{G}^\beta_s$ of symbols we consider thereafter, see Definition
2.1. An important example of allowable symbol is the fractional Laplacian $(-\Delta + m^2)^{\gamma/2}$, $0 < \gamma < 1$ and $m \neq 0$,
as we show in Lemma \ref{xxx}. In Section 3 we introduce Fourier multipliers and we develop a functional
calculus in $L^p(\mathbb{R}^n)$ for symbols $a$ in $\mathcal{G}^\beta_s$.
This functional calculus takes into account the identification of the operator $a(-\Delta)$ with the symbol
$a(|\xi|^2)$, motivated by the well-known fact that in the $L^2(\mathbb{R}^n)$ setting this correspondence is rigorously
achieved with the aid of the Fourier transform, see for instance \cite{G-P-R} (and, for some special symbols, by exploiting the
existence of a dense set of analytic vectors for $\Delta$ in $L^2 (\mathbb{R}^n)$, see \cite{G-P-R3}).
We then define a bona fide operator with domain
$\mathcal{H}^{s,p}(a) \subseteq  L^p(\mathbb{R}^n)$  which corresponds to the formal operator appearing on the left
hand side of Equation (\ref{abs}). The embedding properties mentioned in the previous paragraph are also considered in
this section. In Section 4 we consider Equation (\ref{abs}) and we prove two existence theorems: the first in a
``localized" setting, and the second one in the setting of functions with radial symmetry,
see Theorems \ref{Teo ec no lineal} and \ref{radi} respectively. Let us paraphrase Theorem \ref{radi}:

\noindent
We assume that $a$ is an allowable symbol, that the nonlinear function $V(x,y)$ is spherically symmetric with respect to
$x \in \Rn$, and that the natural growth conditions
$$
| V(x,y) | \leq C (|h(x)| + |y|^\alpha) \quad \mbox{ and } \quad
\left| \frac{\partial}{\partial y} V(x,y) \right| \leq C ( |g(x)| + |y|^{\alpha -1})
$$
hold for some $\alpha > 1$, $C > 0$, and functions $h \in \lp$ and $g\in L^{\frac{\alpha p}{\alpha-1}}(\mathbb{R}^n)$.{\em Then},
Equation (\ref{abs}) has a solution $u \in \mathcal{H}^{s,p}_r(a)$. The aforementioned compact embedding theorems appearing in \cite{Li} are
crucially used in the proof of this theorem. Moreover, they allow us to prove that our solution not only belong to
the radial space $\mathcal{H}^{s,p}_r(a) \subseteq  L^p_r(\mathbb{R}^n)$, but that actually it belongs to $L^{\alpha p}_r(\mathbb{R}^n)$!
Theorem \ref{radi} also includes a bound on the $L^{\alpha p}_r(\mathbb{R}^n)$-norm of the solution $u$.

\noindent
Finally, in Section 5 we apply our radial existence result to prove
existence theorems for equations of physical interest involving the fractional Laplace operator
$a_m(\Delta) = (-\Delta + m^2)^{\gamma/2}$, $0 < \gamma < 1$, $m \neq 0$, and we also discuss briefly the case
of equations depending on $a_0(\Delta) = (-\Delta)^{\gamma/2}$.
Particular examples considered in this section are
(generalizations of) a (perturbed, focusing) fractional Allen-Cahn equation, the Benjamin-Ono equation, a (perturbed) Peierls-Nabarro equation,
and a fractional non-linear Schr\"odinger equation.

\section{Preliminaries}

We begin by defining a class of functions which will be crucial for our work.

\begin{defi} \label{2.1}
Given $s, \beta>0$ and $n \in \mathbb{N}$ such that $\beta\,s \geq 4 n$, we say that a scalar function
$a$ on $\mathbb{R}$ belongs to the class $\gbeta_s(\mathbb R^n)$ if it satisfies the following three conditions:
\be
\item[$G_1$.] The function $a$ belongs to $C^n([0,\infty))$
and the function $t\mapsto a(t^2)$, $t\in\mathbb R$, is non-negative.
\item[$G_2$.] There exist positive constants $R,\ M$ such that
\begin{equation}\label{elipticidad}
M(1+|x|^2)^{\frac{\beta}{2}}\leq a(|x|^2),\quad\mbox{ for all}\quad |x|>R\; , \quad\quad x\in\mathbb R^n \; .
\end{equation}
\item[$G_3$.] For each $k > 0$ there exists a positive constant $N=N(k,s)$ and $\rho \geq 0$ such that, whenever $|x| > \rho$,
the estimate
\begin{equation}\label{crec derivadas}
\left| \,\left.\dfrac{d^{k}}{dt^{k}}a(t)\,\right|_{t=|x|^2} \right| \leq
N(1+|x|^2)^{k(\frac{\beta s}{4n}-1)+\frac{\beta}{2}}
\end{equation}
holds.
\end{enumerate}
\end{defi}

In the above definition we can assume  that $R = \rho$ without loss of generality.
An interesting example of a function $a$ belonging to $\gbeta_s(\mathbb R^n)$ is given in the following lemma,
inspired by the equations studied in \cite{va,FJL,L}:

\begin{lem} \label{xxx}
We take $0 < \gamma < 1$ and we assume that $s > 4n/\gamma$. Then, the function $a(t) = (|t|+m^2)^{\gamma/2}$, $m \neq 0$,
belongs to $\mathcal{G}^\gamma_s(\mathbb R^n)$.
\end{lem}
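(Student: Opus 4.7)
The plan is a direct verification of the three conditions in Definition \ref{2.1} with $\beta = \gamma$. The background requirement $\beta s \geq 4n$ reduces to $\gamma s \geq 4n$, which is immediate from the hypothesis $s > 4n/\gamma$.

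For $G_1$: since $m \neq 0$, one has $t+m^2 \geq m^2 > 0$ on $[0,\infty)$, so $a(t) = (t+m^2)^{\gamma/2}$ is real-analytic there and in particular $C^n$; moreover $a(t^2) = (t^2+m^2)^{\gamma/2} \geq 0$. For the ellipticity condition $G_2$, I would compare $(|x|^2+m^2)^{\gamma/2}$ with $(1+|x|^2)^{\gamma/2}$: the ratio $(|x|^2+m^2)/(1+|x|^2)$ tends to $1$ as $|x|\to\infty$, so for any $M\in(0,1)$ one can pick $R>0$ large enough that $(|x|^2+m^2)^{\gamma/2} \geq M(1+|x|^2)^{\gamma/2}$ for every $|x|>R$.

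The main computation is the derivative bound $G_3$. A straightforward induction gives
$$\frac{d^{k}}{dt^{k}}a(t) = c_{k}\,(t+m^2)^{\gamma/2-k}, \qquad c_{k} = \prod_{j=0}^{k-1}\left(\frac{\gamma}{2}-j\right),$$
so at $t=|x|^2$ we must estimate $|c_{k}|(|x|^2+m^2)^{\gamma/2-k}$ from above. For $k\geq 1$ the exponent $\gamma/2-k$ is negative; for $|x|\geq 1$ one has $|x|^2+m^2 \geq |x|^2 \geq \frac{1}{2}(1+|x|^2)$, so raising to a negative power reverses the inequality and gives
$$(|x|^2+m^2)^{\gamma/2-k} \;\leq\; 2^{k-\gamma/2}(1+|x|^2)^{\gamma/2-k}.$$
Finally, the exponent identity
$$k\!\left(\frac{\gamma s}{4n}-1\right)+\frac{\gamma}{2} \;-\; \left(\frac{\gamma}{2}-k\right) \;=\; \frac{k\gamma s}{4n} \;\geq\; 0,$$
together with $1+|x|^2 \geq 1$, yields $(1+|x|^2)^{\gamma/2-k} \leq (1+|x|^2)^{k(\gamma s/(4n)-1)+\gamma/2}$, proving $G_3$ with $N=|c_{k}|\,2^{k-\gamma/2}$ and $\rho = \max(R,1)$.

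I do not anticipate any genuine obstacle: the proof is a routine verification. The only mild subtlety is choosing $\rho\geq 1$ so that $|x|^2+m^2$ and $1+|x|^2$ are comparable on $|x|>\rho$, and noting that the hypothesis $s>4n/\gamma$ is needed only to secure membership in the class (i.e.\ to give $\beta s \geq 4n$), while the exponent comparison in $G_3$ requires merely $\gamma s/(4n)\geq 0$.
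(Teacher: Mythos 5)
Your proof is correct and follows essentially the same route as the paper: a direct verification of $G_1$, $G_2$, $G_3$ with $\beta=\gamma$. The only substantive difference is in $G_3$, where the paper simply notes that $|a^{(k)}(|x|^2)| = C(k)(|x|^2+m^2)^{(\gamma-2k)/2} < C(k)$ (a bounded quantity) and that the required upper bound exceeds $C(k)$ because its exponent $k(\gamma s/(4n)-1)+\gamma/2$ is strictly positive under $s>4n/\gamma$; you instead compare $(|x|^2+m^2)^{\gamma/2-k}$ to $(1+|x|^2)^{\gamma/2-k}$ and then shift exponents, observing that the shift $k\gamma s/(4n)$ is automatically nonnegative. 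Your variant is marginally sharper in that it shows $G_3$ for this particular $a$ does not really use $s>4n/\gamma$ (that hypothesis only guarantees $\beta s \geq 4n$, as required for class membership), but this does not change the result. Both proofs are correct routine verifications.
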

\begin{proof}
Condition $G_1$ is obvious. In order to show that $a$ satisfies $G_2$,
we note that for $|x|\geq 1$  we have
$$
(1+|x|^2)^{\gamma/2} \leq (|x|^2+|x|^2)^{\gamma/2} \leq 2^{\gamma/2}\,(m^2 + |x|^2)^{\gamma/2} \; .
$$
Finally for $G_3$, we note that for each $k > 0$ there exists a constant number $C(k)$ such that $a^{(k)}(t)=C(k)\, (t+m^2)^{(\gamma-2k)/2}$,
$t \geq 0$. We have, for $|x| > 1$,
$$
|a^{(k)}(|x|^2)|=C(k)\left|\, |x|^2 + m^2 \right|^{(\gamma-2 k)/2}  < C(k)
< C(k) (1+|x|^2)^{k(\frac{\gamma s}{4n}-1)+\frac{\gamma}{2}}\; ,
$$
since $\gamma-2 k < 0$ and we are assuming that $s > 4n/\gamma$.
\end{proof}

\smallskip

We note that if we relax the condition $\beta s /4 n > 1$ in the definition of $\gbeta_s(\mathbb R^n)$, we can include
the H\"ormander class $S^m$, see \cite[Definition 18.1.1]{Hor3}, into $\gbeta_s(\mathbb R^n)$ with $\beta = 2m$ and $s=0$.
Further examples of symbols in our class are given by functions $a(t)$ satisfying $G_1,G_2$ and the estimates
$$
\left| \frac{d^k}{dt^k} a(t) \right| \leq N_k |t|^{k R + \beta/2}
$$
for constant numbers $N_k ,R > 0$, as it is easy to see.

\smallskip

The following proposition is an easy consequence of the
definitions. It allows us to order the classes $\gbeta_s(\mathbb R^n)$.

\begin{prop}\label{contencion gbetas}
Let us fix a number $\beta>0$. If $s_1$ and $s_2$ are real numbers such
that $0\leq s_1 < s_2$ and $\beta s_1 \geq 4 n$, then $\gbeta_{s_1}(\mathbb R^n) \subseteq \gbeta_{s_2}(\mathbb R^n)\, .$
\end{prop}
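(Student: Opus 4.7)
The plan is straightforward because the definition of $\mathcal{G}^\beta_s(\mathbb R^n)$ depends on $s$ only through the exponent appearing in condition $G_3$. Let $a \in \mathcal{G}^\beta_{s_1}(\mathbb R^n)$; I would check each of the three conditions for $a$ as a member of $\mathcal{G}^\beta_{s_2}(\mathbb R^n)$.

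First I would note that conditions $G_1$ and $G_2$ make no reference to $s$ at all, so they transfer from $\mathcal{G}^\beta_{s_1}$ to $\mathcal{G}^\beta_{s_2}$ immediately; only $G_3$ requires any argument. For $G_3$, I would fix $k>0$ and use the $s_1$-estimate: there exist $N(k,s_1)>0$ and $\rho\geq 0$ such that
$$\left|\left.\frac{d^k}{dt^k}a(t)\right|_{t=|x|^2}\right|\leq N(k,s_1)(1+|x|^2)^{k(\frac{\beta s_1}{4n}-1)+\frac{\beta}{2}}$$
for all $|x|>\rho$. The goal is then to dominate the right-hand side by a similar expression with $s_2$ in place of $s_1$.

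The key observation is that the hypothesis $\beta s_1\geq 4n$ gives $\frac{\beta s_1}{4n}-1\geq 0$, and the assumption $s_1<s_2$ then yields
$$k\!\left(\tfrac{\beta s_1}{4n}-1\right)+\tfrac{\beta}{2}\;\leq\; k\!\left(\tfrac{\beta s_2}{4n}-1\right)+\tfrac{\beta}{2}.$$
Since $|x|>\rho$ gives $(1+|x|^2)\geq 1$, raising a base $\geq 1$ to a larger exponent only increases its value, so the $s_1$-estimate is dominated by the $s_2$-estimate with the same constants $N(k,s_2):=N(k,s_1)$ and the same $\rho$. This verifies $G_3$ for the index $s_2$ and completes the proof.

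There is no real obstacle here; the only point one has to be careful with is that the monotonicity step uses $\frac{\beta s_1}{4n}-1\geq 0$, which is precisely why the hypothesis $\beta s_1\geq 4n$ is built into the statement. Without it, the exponent could be negative and the inequality would reverse on the region $|x|>\rho$.
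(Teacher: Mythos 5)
Your proof is correct and follows the same route as the paper: $G_1$ and $G_2$ are $s$-independent, and for $G_3$ one compares exponents and invokes the monotonicity of $t\mapsto (1+|x|^2)^t$. But the justification in your closing paragraph is misplaced. The exponent inequality
$$k\Bigl(\tfrac{\beta s_1}{4n}-1\Bigr)+\tfrac{\beta}{2}\;\leq\;k\Bigl(\tfrac{\beta s_2}{4n}-1\Bigr)+\tfrac{\beta}{2}$$
is equivalent to $\tfrac{k\beta s_1}{4n}\leq\tfrac{k\beta s_2}{4n}$, which follows from $s_1<s_2$ alone (with $k,\beta,n>0$); it does not use $\tfrac{\beta s_1}{4n}-1\geq 0$ at all. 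Likewise, $t\mapsto c^t$ is increasing on all of $\mathbb{R}$ once $c=1+|x|^2>1$, so nothing ``reverses'' if the exponent were negative. The actual reason $\beta s_1\geq 4n$ appears in the hypotheses is that Definition \ref{2.1} requires $\beta s\geq 4n$ for the class $\gbeta_s(\mathbb R^n)$ to be defined in the first place; the hypothesis ensures $\gbeta_{s_1}(\mathbb R^n)$ makes sense, and $\beta s_2>\beta s_1\geq 4n$ then follows automatically so that $\gbeta_{s_2}(\mathbb R^n)$ does too. The proof stands; only that final explanatory remark should be corrected.
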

\begin{proof}
Let us consider $a\in\gbeta_{s_1}(\mathbb R^n)$. Conditions $G_1$ and $G_2$
depend only on $\beta$, so we only prove
that $G_3$ holds if we use $s_2$ instead of $s$. The result follows from the obvious facts that
$s_1<s_2$ implies $\dfrac{\beta s_1}{4n} < \dfrac{\beta s_2}{4n}$ and that the mapping
$t \mapsto (1+|x|^2)^t$, $t > 0$, is increasing.
\end{proof}

Now we take $\mu>0$. We denote by $m_{a,\mu}$ the function
\begin{equation}\label{funcion m}
m_{a,\mu}(x)=\dfrac{1}{(1+a(|x|^2))^{\mu/2}}\; , \quad\quad\quad x\in \mathbb R^n\; .
\end{equation}

We will prove in the next section that if $a$ belongs to an appropriate class $\gbeta_s(\mathbb{R}^n)$ and $\mu$ is chosen
accordingly, the function $m_{a,\mu}$ is an $L^p(\mathbb{R}^n)$ Fourier multiplier (see Definition 3.1 below).
As a preparation, we consider the derivatives of $m_{a,\mu}\,.$

We let $I = ( i_1, i_2, \cdots , i_k )$ be a multi-index
of {\em size} $k$ with $1 \leq i_k \leq n$; we will always assume that multi-indexes are ordered, this is, $i_r \leq i_s$
if $r \leq s$. We say that $J = (j_1, \cdots, j_r)$ is contained in $I$ if $r \leq k$ and for each $1 \leq p \leq r$ we
have $j_p = i_q$ for some $1 \leq q \leq k$.  We define collections of subsets of $I$ as follows:

\begin{itemize}
\item[~] $C_r$ is the set of all collections $\{I_1, \cdots , I_r \}$ of subsets of $I$ which are pairwise disjoint
and satisfy $I_1 \cup \cdots \cup I_r = I$.
\end{itemize}

\noindent We also set $\partial_I ( u ) = \partial_{x_{i_1}} \cdots \partial_{x_{i_k}} (u)$. A straightforward computation
yields the formula
\begin{eqnarray}
\partial_I (m_{a,\mu}) & = & A_I\, \partial_I(a) \,m_{a,\mu+2} \, +
    \sum_{\{I_1,I_2 \} \in C_2} A_{I_1,I_2}\, \partial_{I_1}(a) \partial_{I_2}(a)\,m_{a,\mu+4} \, + \nonumber\\
&  &\sum_{\{I_1,I_2,I_3 \} \in C_3} A_{I_1,I_2,I_3}\, \partial_{I_1}(a) \partial_{I_2}(a) \partial_{I_3}(a) \,m_{a,\mu+6} + \cdots + \nonumber \\
&  &\sum_{\{I_1,I_2, \cdots , I_k \} \in C_k} A_{I_1,I_2, \cdots , I_k}\, \partial_{I_1}(a) \partial_{I_2}(a) \cdots \partial_{I_k} (a) \,m_{a,\mu+2k} \; ,
\label{derivada de m}
\end{eqnarray}
\noindent in which $A_I, A_{I_1,I_2}, \cdots$ are constant numbers. We will need this formula in the case all components
$i_p$ of $I = ( i_1, i_2, \cdots , i_k )$ are different. Thus, we complement (\ref{derivada de m})
with the obvious observation
\begin{equation} \label{derivada de m1}
\partial_{x_{i_1}} \cdots \partial_{x_{i_k}}(a(|x|^2)) = 2^k x_{i_1}\cdots x_{i_k}a^{(k)}(|x|^2) \quad \quad
\mbox{ all $i_p$ different} \; .
\end{equation}

\section{Fourier multipliers and functional calculus}   \label{section Fourier multip}
\subsection{Fourier multipliers}

Hereafter we write either $\Fou(f)$ or $\widehat{f}$ for the Fourier transform of $f$.

\begin{defi}\label{def multip}
Let $m$ be a bounded measurable function on $\Rn$. We define a
linear transformation $T_m$ with domain $\ldos\cap\lp$ as follows:
\begin{equation}\label{mult de fourier}
T_m f = \Fou^{-1}\left( m(\xi)\fou{f} \, \right) ,\quad f\in\ldos\cap\lp \; .
\end{equation}
We say that $m$ is a Fourier multiplier for $\lp$, $1 < p <
\infty$, if whenever $f\in\ldos\cap\lp$ then $T_m f$ belongs to
$\lp$ and $T_m$ is bounded, that is,
\begin{equation}
\|T_m(f)\|_{\lp}\leq A\|f\|_{\lp}, \quad f\in\ldos\cap\lp\quad
\end{equation}
for a real constant $A$ independent of $f$.
\end{defi}

This definition is explained in \cite[p. 94]{Ste} and \cite[p. 489]{Ar}. Its
importance for us is due to Proposition \ref{mfs multiplicador} below, in which the relevance
of our conditions $G_1$--$G_3$ become clear. The proof of this proposition is based on the
following lemma, see \cite[Theorem 2]{Gui}.

\begin{lem} \label{gui}
Let $m \in  C^n(\mathbb{R}^n \setminus \{0\})$ be such that for every multi-index $\alpha \in \mathbb{N}^n_0$ with $\alpha \leq (1, \dots , 1)$,
and every $x \in \mathbb{R}^n \setminus \{0\}$,
\begin{equation}
|x_\alpha\, D^\alpha m(x)| \leq  C
\end{equation}
for some $C > 0$. Then $m$ is a Fourier multiplier for $\lp$ for every $1 < p < \infty$.
\end{lem}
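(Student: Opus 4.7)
The plan is to derive this statement from the Marcinkiewicz multiplier theorem: the hypothesis that the mixed partials of $m$ weighted by the corresponding coordinate monomials are uniformly bounded, over all $0$--$1$ multi-indices $\alpha$, is exactly the pointwise form of Marcinkiewicz's condition on dyadic rectangles. Indeed, rewriting it as $|D^\alpha m(\xi)|\le C/|\xi_\alpha|$ immediately yields a uniform bound on $\int_R |D^\alpha m(\xi)|\,d\xi$ over every dyadic rectangle $R$, since the one-dimensional integrals $\int_{2^{k_i}}^{2^{k_i+1}} d\xi_i/\xi_i$ are independent of $k_i$.

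First, I would set up a Littlewood--Paley decomposition by dyadic rectangles. For each $\mathbf{k}\in\mathbb{Z}^n$ and each sign pattern $\varepsilon\in\{-1,+1\}^n$, let
$$R_{\mathbf{k},\varepsilon}=\bigl\{\xi\in\Rn:\varepsilon_i\xi_i\in[2^{k_i},2^{k_i+1}),\ i=1,\ldots,n\bigr\},$$
and define the projection $\Delta_{\mathbf{k},\varepsilon}f=\Fou^{-1}(\chi_{R_{\mathbf{k},\varepsilon}}\fou{f})$. Iterating the one-variable Littlewood--Paley theorem (which rests on Khintchine's inequality and the $\lp$-boundedness of the Hilbert transform) in each coordinate gives, for $1<p<\infty$, the square-function equivalence
$$\|f\|_{\lp}\sim\Bigl\|\bigl(\sum_{\mathbf{k},\varepsilon}|\Delta_{\mathbf{k},\varepsilon}f|^2\bigr)^{1/2}\Bigr\|_{\lp}.$$

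Second, since $T_m$ commutes with every $\Delta_{\mathbf{k},\varepsilon}$, the boundedness of $T_m$ on $\lp$ reduces to a uniform control of the multiplier on each rectangle. On a single rectangle I would expand $m$ via the fundamental theorem of calculus applied successively in each coordinate, obtaining
$$m(\xi)\,\chi_{R_{\mathbf{k},\varepsilon}}(\xi)=\sum_{J\subseteq\{1,\ldots,n\}}\int D^J m(\eta)\,d\eta_J,$$
where each integral runs over a sub-rectangle of $R_{\mathbf{k},\varepsilon}$ determined by $\xi$ and a fixed corner. The hypothesis then dominates $|D^J m(\eta)|$ by $C/|\eta_J|$, which, combined with Khintchine's inequality and the vector-valued one-dimensional Littlewood--Paley estimate, produces the desired square-function bound on $T_m f$.

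The main obstacle is this second step. A naive $L^\infty$ bound on $m\,\chi_{R_{\mathbf{k},\varepsilon}}$ yields only a crude constant and is insufficient for summing the square function; one genuinely needs the mixed-partial expansion of $m$ together with the cancellation encoded in Khintchine's inequality, so that the action of $T_m$ can be dominated rectangle by rectangle. Once this is in place, summing over $\mathbf{k}$ and $\varepsilon$ and invoking the square-function equivalence produces $\|T_m f\|_{\lp}\le A\|f\|_{\lp}$ with $A$ depending only on $C$, $n$, and $p$.
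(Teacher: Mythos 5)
The paper does not prove this lemma: it takes it as given from the literature, citing Guidetti \cite[Theorem 2]{Gui} and, a line later in the proof of Proposition 3.3, Stein \cite[Theorem IV.6']{Ste}. The hypothesis $|x_\alpha D^\alpha m(x)|\le C$ for all $\alpha\le(1,\dots,1)$ is indeed the pointwise (Lizorkin) form of the Marcinkiewicz condition, and the plan you outline --- product Littlewood--Paley square-function equivalence over dyadic rectangles, commutation of $T_m$ with the rectangular projections, a fundamental-theorem-of-calculus expansion of $m$ coordinate by coordinate on each rectangle, and Khintchine's inequality combined with vector-valued one-dimensional (Hilbert transform) estimates --- is exactly the standard proof of the Marcinkiewicz multiplier theorem contained in those references. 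You correctly observe that the pointwise bound implies the dyadic-rectangle integral bound via $\int_{2^{k}}^{2^{k+1}}d\xi_i/\xi_i=\log 2$, and you correctly identify where the real work lies, namely in passing from the rectangle-by-rectangle expansion of $m$ to a square-function estimate; the details you leave implicit (sign bookkeeping in the iterated FTC expansion, and the reduction from multipliers supported on a dyadic rectangle to partial-sum operators and iterated directional Hilbert transforms, then Minkowski) are filled in as in Stein's Chapter IV. In short, your proposal supplies a proof that the paper merely cites, and it is faithful to the argument behind the cited results; no alternative route is involved.
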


\noindent We have:

\begin{prop}\label{mfs multiplicador}
Let $\beta , s > 0$ such that $\beta \, s \geq 4\, n$, and let $a\in\gbeta_{s}(\mathbb{R}^n)$. If $1<p<\infty$,
then the function  $m_{a,\mu}$ defined in $(\ref{funcion m})$ is a Fourier multiplier for $\lp$ whenever $\mu \geq s$.
\end{prop}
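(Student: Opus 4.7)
My plan is to apply Guillemin's multiplier criterion (Lemma \ref{gui}) directly: check that $m_{a,\mu} \in C^n(\mathbb{R}^n \setminus \{0\})$ and that $|x^\alpha D^\alpha m_{a,\mu}(x)| \leq C$ for every multi-index $\alpha \in \{0,1\}^n$ and every $x \neq 0$. The smoothness assertion is immediate from $G_1$: since $a \in C^n([0,\infty))$, $x \mapsto a(|x|^2)$ is $C^n$, and since $a(|x|^2)\geq 0$ we have $1+a(|x|^2)\geq 1$, so $m_{a,\mu} = (1+a(|x|^2))^{-\mu/2}$ is $C^n$ on all of $\mathbb{R}^n$.

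The real work is the pointwise bound on $x^\alpha D^\alpha m_{a,\mu}$. Writing $I$ for the ordered index set corresponding to $\alpha$, so that $D^\alpha = \partial_I$ and $|I| = k \leq n$, I would plug formula \eqref{derivada de m} for $\partial_I(m_{a,\mu})$ into $x^\alpha \partial_I m_{a,\mu}$. Each term, indexed by a partition $\{I_1,\dots,I_r\}\in C_r$ of $I$, is a constant multiple of
\[
x^\alpha \, \partial_{I_1}(a(|\cdot|^2))(x)\cdots \partial_{I_r}(a(|\cdot|^2))(x)\, m_{a,\mu+2r}(x).
\]
All components of each $I_j$ are distinct (they sit in $\alpha \leq (1,\ldots,1)$), so \eqref{derivada de m1} applies: $\partial_{I_j}(a(|x|^2)) = 2^{|I_j|}\,x_{I_j}\,a^{(|I_j|)}(|x|^2)$. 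Since $I_1\cup\cdots\cup I_r = I$ and the $I_j$ are pairwise disjoint, the product of the $x_{I_j}$ equals $x^\alpha$. So each term becomes (a constant times)
\[
(x^\alpha)^2\,\prod_{j=1}^{r} a^{(|I_j|)}(|x|^2)\,m_{a,\mu+2r}(x).
\]

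For $|x| > R = \rho$, I bound as follows. First, $|x^\alpha|^2 \leq (1+|x|^2)^k$. Next, $G_3$ gives $|a^{(|I_j|)}(|x|^2)| \leq N(1+|x|^2)^{|I_j|(\beta s/4n-1)+\beta/2}$, so the product over $j$ is bounded by a constant times $(1+|x|^2)^{k(\beta s/4n-1)+r\beta/2}$. Finally, $G_2$ yields $m_{a,\mu+2r}(x)\leq C(1+|x|^2)^{-\beta(\mu+2r)/4}$. Adding the exponents of $(1+|x|^2)$ gives
\[
k + k\!\left(\tfrac{\beta s}{4n}-1\right)+\tfrac{r\beta}{2} - \tfrac{\beta(\mu+2r)}{4}
= \tfrac{\beta}{4}\!\left(\tfrac{ks}{n}-\mu\right),
\]
which is $\leq 0$ precisely because $k\leq n$ and $\mu \geq s$. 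This is the only place the hypothesis $\mu\geq s$ enters, and it is the heart of the argument. On the complementary region $|x|\leq R$, $x^\alpha\partial_I m_{a,\mu}$ is continuous (by the smoothness argument above) on a compact set, hence bounded.

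The expected obstacle is not any single inequality but keeping the exponent bookkeeping clean across all the terms in \eqref{derivada de m}: one has to verify that the growth coming from $r$ factors of $a^{(|I_j|)}$ is exactly balanced by the extra decay of $m_{a,\mu+2r}$ (the $r\beta/2$ in the numerator cancels the $r\beta/2$ coming from $-\beta(2r)/4$), leaving a condition that depends only on $k,n,s,\mu,\beta$. Once this cancellation is observed, the choice $\beta s \geq 4n$ together with $\mu\geq s$ closes the estimate uniformly in $\alpha$, and Lemma \ref{gui} delivers the conclusion.
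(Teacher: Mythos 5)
Your proof is correct and follows essentially the same route as the paper: Lemma \ref{gui} applied via formula \eqref{derivada de m}, with $G_2$ controlling $m_{a,\mu+2r}$, $G_3$ controlling the $a^{(|I_j|)}$, and the exponent arithmetic reducing to $\tfrac{\beta}{4}\bigl(\tfrac{ks}{n}-\mu\bigr)\le 0$ via $k\le n$ and $\mu\ge s$, plus compactness near the origin. The only difference is a small tidiness gain: you cancel the $x_{I_j}$ factors from \eqref{derivada de m1} against the prefactor $x^\alpha$ to get $(x^\alpha)^2\le(1+|x|^2)^k$ in one step, whereas the paper bounds $|x^\alpha|$ and the internal $x_{I_j}$ factors separately by $(1+|x|^2)^{k/2}$ each — both give the same final exponent.
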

\begin{proof}
We use the multi-index convention appearing in the computations leading to (\ref{derivada de m}).
Let us show that for every multi-index $I=(i_1,i_2,\ldots,i_k)$ with $i_1 < i_2 < i_3 < \cdots < i_k$,
there exists a constant $C$ such that for all $x\in\Rn$
\begin{equation} \label{aux1}
|x_{i_1} x_{i_2} \cdots x_{i_k}\,\partial_{x_{i_1}}\, \cdots \partial_{x_{i_k}}\, m_{a,\mu}(x)| \leq C
\end{equation}
whenever $\mu \geq s$. It follows from Lemma \ref{gui}, see also Theorem IV.6' of \cite{Ste}, that this is
enough for proving the proposition.

First, we use that for every $1\leq i\leq n$ we have that $|x_i| \leq (1+|x|^2)^{1/2}\,$. Thus,
\begin{equation} \label{aux111}
|x_{i_1} x_{i_2} \cdots x_{i_k}\,\partial_{x_{i_1}}\, \cdots \partial_{x_{i_k}}\, m_{a,\mu}(x)| \leq
(1+|x|^2)^{k/2}\,|\partial_{x_{i_1}}\, \cdots \partial_{x_{i_k}}\, m_{a,\mu}(x)| \; .
\end{equation}
Now we set $I =(i_1, \cdots , i_k)$ and we estimate $|\partial _I m_{a,\mu}(x)|$ using formula (\ref{derivada de m}).
We bound each one of the summands appearing therein. for each $r=1, \cdots, k$  we consider

\noindent
$
S_r = m_{a,\mu+2r}(x) \sum_{\{I_1,\cdots,I_r \} \in C_r} A_{I_1, \cdots ,I_r}\,\partial_{I_1}(a)\cdots \partial_{I_r}(a)\; .
$
We have
\begin{equation} \label{aux11}
|S_r| \leq | m_{a,\mu+2r}(x) | \sum_{\{I_1,\cdots,I_r \} \in C_r} |\partial_{I_1}(a)| \cdots |\partial_{I_r}(a)| \; ,
\end{equation}
in which we have omitted writing the constant number $A_{I_1, \cdots ,I_r}$, as it is actually superfluous for proving
(\ref{aux1}). We begin by estimating $| m_{a,\mu+2r}(x) |$ for $|x|$ large enough:
\begin{equation} \label{aux2}
|m_{a,\mu+2r}(x)|=\left|\frac{1}{(1+a(|x|^2))^{\frac{\mu+2r}{2}}}\right|\leq\frac{1}{(1+|x|^2)^{\frac{\beta(\mu+2r)}{4}}}\; ,
\end{equation}
in which we have used condition $G_2$, see (\ref{elipticidad}). Now we estimate each factor $|\partial_{I_q} (a(|x|^2))|$,
$1 \leq p \leq r$ appearing in (\ref{aux11}). Let us assume that $I_q =(j_1, \cdots,j_{p_q})$. Then, (\ref{derivada de m1})
implies
\begin{eqnarray}
|\partial_{I_q} (a(|x|^2))| & = & 2^{p_q} \left| x_{j_1}\cdots x_{j_{p_q}}a^{(p_q)}(|x^2|) \right| \nonumber \\
& \leq & 2^{p_q} (1+|x|^2)^{\frac{p_q}{2}} \left| a^{(p_q)}(|x^2|) \right| \nonumber \\
& \leq & (1+|x|^2)^{\frac{p_q}{2}}\, (1+|x|^2)^{p_q\left(\frac{\beta s}{4 n} - 1\right) + \frac{\beta}{2}}
\end{eqnarray}
for $|x|$ large enough, in which we have used condition $G_3$, see (\ref{crec derivadas}), and we have omitted constant
numbers which are irrelevant for our present goal. Now we go back to (\ref{aux11}). We obtain
\begin{eqnarray}
|S_r| & \leq & (1+|x|^2)^{-\frac{\beta(\mu+2r)}{4}}  \sum_{\{I_1,\cdots,I_r \} \in C_r} (1+|x|^2)^{\frac{k}{2}}
 (1+|x|^2)^{k\left(\frac{\beta s}{4 n} - 1\right) + r \frac{\beta}{2}} \nonumber \\
 & \leq & (1+|x|^2)^{\frac{k}{2}-\frac{\beta(\mu+2r)}{4}+k\left(\frac{\beta s}{4 n} - 1\right)}
   \sum_{\{I_1,\cdots,I_r \} \in C_r}(1+|x|^2)^{r \frac{\beta}{2}}
 \nonumber \\
 & \leq & (1+|x|^2)^{\frac{k}{2}-\frac{\beta(\mu+2r)}{4}+k\left(\frac{\beta s}{4 n} - 1\right)+ r \frac{\beta}{2}}
 \nonumber \\
 & = & (1+|x|^2)^{-\frac{k}{2}-\frac{\beta \mu}{4}+k \frac{\beta s}{4 n}} \; ,\label{aux222}
\end{eqnarray}
in which, once more, we have absorbed irrelevant constant numbers. We can go back to (\ref{aux111}). Multiplying
(\ref{aux222}) by $(1+|x|^2)^{\frac{k}{2}}$ we obtain that each summand $(1+|x|^2)^{\frac{k}{2}}\,S_r$ of (\ref{aux111})
is bounded by
$$
(1+|x|^2)^{-\frac{\beta \mu}{4}+k \frac{\beta s}{4 n}} = (1+|x|^2)^{\frac{\beta}{4}\left(-\mu + \frac{k}{n}\,s\right)} \; .
$$
Since $k \leq n$, the exponent $\frac{\beta}{4}\left(-\mu + \frac{k}{n}\,s\right)$ is at most zero whenever $\mu \geq s$.
Thus, (\ref{aux111}) implies that the uniform bound (\ref{aux1}) holds for $|x|$ large enough. Condition $G_1$ and the fact that
$a \in C^n( [0,\infty))$ allow us to conclude that (\ref{aux1}) holds for all $x$. It follows from Lemma \ref{gui} that the function
$m_{a,\mu}$ is a Fourier multiplier for $\lp$ for every $p\in(1,\infty)$ and for all $\mu \geq s > 0$.
\end{proof}

Now we take  $\beta , s > 0$, $\beta \, s \geq 4\, n$, and we consider a symbol $a \in \gbeta_{s}(\mathbb{R}^n)$, the
Fourier multiplier $m_{a,s}\,$, and the induced linear transformation $T_{m_{a,s}}$ introduced in Definition 3.1. An
important characteristic of $T_{m_{a,s}}$ is the following invariance property:

\begin{prop}\label{Ts op invariante}
Let us set $T_s = T_{m_{a,s}}$ on $\lp$, $1 < p < \infty$, that is,
\begin{equation}\label{operador Ts}
T_s(u) = \Fou^{-1}\left(\dfrac{1}{(1+a(|\xi|^2))^{s/2}}\,\fou{u}\right).
\end{equation}
Then, $T_s$ is a bounded translation-invariant operator from $L^2(\mathbb{R}^n) \cap \lp$ to $\lp$.
\end{prop}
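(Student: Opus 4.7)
My plan is to treat the two assertions—$\lp$-boundedness and translation invariance—separately. The first is essentially a repackaging of Proposition \ref{mfs multiplicador}, while the second is a general feature of any operator defined by Fourier multiplication.

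For the boundedness claim, I would simply invoke Proposition \ref{mfs multiplicador} with $\mu = s$. Since the hypothesis $a \in \gbeta_{s}(\mathbb R^n)$ together with $\beta s \geq 4n$ is precisely what that proposition requires, it yields that $m_{a,s}$ is a Fourier multiplier for $\lp$ for every $1 < p < \infty$. By Definition \ref{def multip} this is exactly the statement that $T_s$ sends $\ldos \cap \lp$ into $\lp$ and satisfies $\|T_s(u)\|_{\lp} \leq A \|u\|_{\lp}$ for some constant $A$ independent of $u$.

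For translation invariance, let $\tau_h$ denote translation by $h \in \Rn$, so that $(\tau_h f)(x) = f(x-h)$. Since both $\ldos$ and $\lp$ are translation-invariant, $\tau_h u$ belongs to $\ldos \cap \lp$ whenever $u$ does, so $T_s(\tau_h u)$ is well-defined. I would then use the translation-modulation duality of the Fourier transform, $\Fou(\tau_h f)(\xi) = e^{-i h \cdot \xi}\, \fou{f}(\xi)$, to rewrite $T_s(\tau_h u) = \Fou^{-1}\bigl( m_{a,s}(\xi)\, e^{-i h \cdot \xi}\, \fou{u}(\xi) \bigr)$, and apply the same identity in the inverse direction to pull the modulation outside as a translation, concluding that $T_s(\tau_h u) = \tau_h(T_s u)$.

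I do not expect a genuine obstacle. The only small bookkeeping point is to check that the intermediate products $m_{a,s}\,\fou{u}$ and $e^{-i h \cdot \xi}\, m_{a,s}\,\fou{u}$ lie in $\ldos$, so that the inverse Fourier transform is taken in its usual $\ldos$ sense; this is automatic because $\fou{u} \in \ldos$ by Plancherel on $\ldos$, $m_{a,s}$ is bounded by conditions $G_1$ and $G_2$, and multiplication by the unimodular factor $e^{-i h \cdot \xi}$ preserves $\ldos$. All of the substantive analytic work has already been absorbed into Proposition \ref{mfs multiplicador}; the present proposition adds only the symmetry observation that Fourier multiplier operators commute with translations.
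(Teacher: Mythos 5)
Your proposal is correct, and the first half (boundedness) is exactly what the paper does: cite Proposition \ref{mfs multiplicador} with $\mu=s$ and unwind Definition \ref{def multip}. For translation invariance the paper is much terser; it says only that ``invariance follows from well-known properties of Fourier transform, and the fact that $m_{a,s}$ is rotationally invariant.'' You instead make the argument explicit via the translation--modulation duality $\Fou(\tau_h f) = e^{-i h\cdot\xi}\,\fou{f}$, which is the right mechanism: \emph{any} Fourier multiplier operator commutes with translations, regardless of whether the multiplier is radial. The radial symmetry of $m_{a,s}$ that the paper mentions would only be relevant for showing $T_s$ commutes with \emph{rotations}, which is not what the proposition claims, so in this respect your proof is more precise than the one in the paper. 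Your bookkeeping remark is also sound: $m_{a,s}$ is bounded ($G_1$ already forces $1+a(|\xi|^2)\geq 1$, hence $|m_{a,s}|\leq 1$), $\fou{u}\in\ldos$ by Plancherel, and the unimodular factor $e^{-ih\cdot\xi}$ preserves $\ldos$, so the inverse transforms are taken in the usual $\ldos$ sense.
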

\begin{proof}
Since $m_{a,s}$ is a Fourier multiplier, we know that $T_s$ is a bounded
linear operator from $L^2(\mathbb{R}^n) \cap \lp$ to $\lp$. We write
$$T_s(u) = \Fou^{-1}\left(m_{a,s}(\xi)\,\fou{u}\,\right).$$
Then, invariance follows from well-known properties of Fourier transform, and
the fact that $m_{a,s}$ is rotationally invariant.
\end{proof}

We end this subsection by observing that if $p < \infty$, Definition \ref{def multip} implies that $T_s$ has a
unique bounded extension to $\lp$. Following standard usage, we keep writing $T_s$ for this extension. This observation
appears in Stein's book \cite[p. 94]{Ste}.

\subsection{Functional calculus for $\gbeta_s(\mathbb R^n)$ symbols}

First of all we observe that (under suitable conditions) operators $T_s$ act as convolution operators with
a kernel $K$ which we now introduce.

\begin{lem}\label{K en L uno}
Let $s , \beta$ such that $\beta s>4n$, and let $a\in\gbeta_s(\mathbb R^n)$. Let us consider
\begin{equation}\label{definicion K}
K(x)=\int_{\Rn}\dfrac{e^{i\xi x}}{(1+a(|\xi|^2))^{s/2}}d\xi\; .
\end{equation}
Then, $K\in\ldos$.
\end{lem}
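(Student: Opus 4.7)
The plan is to recognize that, up to a normalization constant, $K$ is the inverse Fourier transform of the multiplier $m_{a,s}$ introduced in (\ref{funcion m}), so I will reduce the problem $K\in \ldos$ to showing $m_{a,s}\in\ldos$ via Plancherel's theorem. Concretely, if I can prove
$$
\int_{\Rn}\frac{d\xi}{(1+a(|\xi|^2))^{s}}<\infty,
$$
then the integral defining $K(x)$ is well defined (in fact $m_{a,s}\in\luno$ as a bonus, which makes $K$ a bounded continuous function), and the Plancherel identity yields $\|K\|_{\ldos}=(2\pi)^{n/2}\|m_{a,s}\|_{\ldos}<\infty$.

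To establish the $\ldos$-integrability of $m_{a,s}$, I split $\Rn$ into the ball $\{|\xi|\leq R\}$ and its complement, where $R$ is the constant from condition $G_2$. On the ball, continuity from $G_1$ and the non-negativity of $t\mapsto a(t^2)$ imply that $1+a(|\xi|^2)$ is bounded below by a positive constant, so the integrand is bounded on a compact set and contributes a finite amount. On the complement, I invoke the ellipticity bound (\ref{elipticidad}) from $G_2$ to estimate
$$
\frac{1}{(1+a(|\xi|^2))^{s}}\leq \frac{1}{a(|\xi|^2)^{s}}\leq \frac{1}{M^{s}(1+|\xi|^2)^{\beta s/2}}\; .
$$

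The remaining task is the convergence of $\int_{|\xi|>R}(1+|\xi|^2)^{-\beta s/2}\,d\xi$, which is a standard polar-coordinates computation: it converges precisely when $\beta s>n$. Since we are assuming $\beta s>4n$, this is satisfied with considerable room to spare (indeed, the same argument shows $m_{a,s}\in\luno$ under the weaker requirement $\beta s>2n$). There is no serious obstacle in this proof; the only subtlety is making sure the integral defining $K$ is interpreted correctly, but the $L^1$-integrability of $m_{a,s}$ that follows from the same estimate shows that $K$ is a genuine absolutely convergent integral, so no distributional interpretation is needed.
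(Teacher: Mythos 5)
Your proof is correct and follows essentially the same route as the paper: bound $m_{a,s}$ via the ellipticity condition $G_2$, conclude $m_{a,s}\in\ldos$, and deduce $K\in\ldos$ by Plancherel. The extra bookkeeping you include (splitting into the ball $\{|\xi|\leq R\}$ and its complement, and noting $m_{a,s}\in\luno$ as a bonus so that $K$ is an absolutely convergent integral) makes explicit what the paper leaves implicit, but it is not a different argument.
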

\begin{proof}
We note that the ellipticity condition $(G_2)$ yields the inequality
\begin{equation}
\dfrac{1}{(1+a(|\xi|^2))^{s/2}}\leq \dfrac{M}{(1+|\xi|^2)^{\frac{\beta\,s }{4}}}
\end{equation}
for $|x|>R$, and therefore $m_{a,s} \in \ldos$. It follows that $K = \mathcal{F}(m_{a,s})$ belongs to $\ldos$.
\end{proof}


We now define an operator $A$ on $\lp$ for each symbol $a\in \gbeta_s(\mathbb R^n)$. We
assume without further mention that each time we consider Fourier transform (or
inverse Fourier transform) of elements in $\lp$, we are identifying these elements with appropriate tempered distributions.

\begin{defi}\label{defi D(A)}
Let $a\in \gbeta_s(\mathbb R^n)$, $s , \beta>0$. We set
$$A(u):=\Fou^{-1}\left( \left(1+a(|\xi|^2) \right)^{s/2}\,\fou{u}\,\right)$$
on $\lp$, $1 < p < \infty$, with domain
$$D(A)=\{u\in\lp:\Fou^{-1}((1+a(|\xi|^2))^{s/2}\,\fou{u})\in\lp\}.$$
\end{defi}

\begin{rem} \label{te-ese}
We need to check that $D(A)$ is non-trivial. Indeed, let $a\in\gbeta_s(\mathbb R^n)$, $\beta>0$, $s\geq 0$.
We consider the bounded operator $T_s$ defined in Proposition $\ref{Ts op invariante}$. It is easy to see, using the fact
that Fourier transform is an isomorphism at the level of tempered distributions,
that $u \in \lp$ belongs to $D(A)$ if and only if
\begin{equation} \label{iso}
u = T_s (g)
\end{equation}
for some $g\in\lp$. This argument also shows that $T_s$ is invertible and $T_s^{-1} = A$.
\end{rem}

\smallskip

Now we show that $T_s$ acts as a convolution operator on an appropriate domain. We use the following characterization result on bounded invariant
translation operators on $\lp$ appearing in Theorem 1.2 of H\"ormander's paper \cite{Hor}: if $T$ is such an operator, there exists a unique
tempered distribution $\Theta \in \Schp$ such that $T g = \Theta \ast g$.

\begin{prop} \label{defi D(A)4}
Let $a\in\gbeta_s(\mathbb R^n)  \cap C^\infty(\mathbb{R})$, $\beta s > 4n$, and let us denote by
$\Sch$ the Schwartz space. If a function $u$ belongs to $D(A) \cap \Sch$ then
$u = T_s (g) = K \ast g\,$ for $g \in \Sch$, in which $K$ has been defined in $(\ref{definicion K})$.
\end{prop}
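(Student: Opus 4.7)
The plan is to identify $g$ explicitly, verify that it lies in $\Sch$, and then recognize $T_s(g)$ as a convolution via the standard convolution theorem.

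By Remark \ref{te-ese}, membership $u \in D(A)$ is equivalent to the existence of $g \in \lp$ with $u = T_s(g)$, and the only candidate is $g := A(u) = \Fou^{-1}\bigl((1+a(|\xi|^2))^{s/2}\,\fou{u}\,\bigr)$. Hence the two things to prove are (i) $g \in \Sch$, and (ii) $T_s(g) = K \ast g$ for this $g$.

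For (i) it suffices to check that the multiplier $M(\xi) := (1+a(|\xi|^2))^{s/2}$ is smooth with polynomial bounds on every partial derivative, because then $M\,\fou{u} \in \Sch$ whenever $\fou{u}\in\Sch$, and applying $\Fou^{-1}$ keeps us in $\Sch$. Smoothness is immediate from $a \in C^\infty$. For the polynomial bounds I would iterate the Fa\`a di Bruno / Leibniz rules exactly as in the derivation of (\ref{derivada de m}), expressing $\partial_I M$ as a finite sum of terms of the form
\[
(1+a(|\xi|^2))^{s/2-j}\, \partial_{I_1}(a(|\xi|^2)) \cdots \partial_{I_j}(a(|\xi|^2)) .
\]
Each factor $\partial_{I_q}(a(|\xi|^2))$ is polynomially bounded by combining (\ref{derivada de m1}) with the growth estimate (\ref{crec derivadas}) of $G_3$, while the factor $(1+a(|\xi|^2))^{s/2-j}$ is polynomially bounded by using the lower bound (\ref{elipticidad}) from $G_2$ for large $|\xi|$ together with continuity on compacts. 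This is essentially the same calculation that produced (\ref{aux222}) in the proof of Proposition \ref{mfs multiplicador}, but with a positive exponent instead of a negative one; no new idea is required.

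For (ii), Lemma \ref{K en L uno} together with $\beta s > 4n$ gives $m_{a,s} \in \ldos$, and by construction $K$ is (up to the usual Fourier constant) $\Fou(m_{a,s})$. Since $g \in \Sch$, the Fourier transform $\fou{g}$ is Schwartz, so $m_{a,s}\,\fou{g} \in L^1(\mathbb{R}^n) \cap \ldos$, and the convolution theorem applies to give
\[
T_s(g) \;=\; \Fou^{-1}\bigl(m_{a,s}\,\fou{g}\,\bigr) \;=\; \Fou^{-1}(m_{a,s}) \ast g \;=\; K \ast g ,
\]
as required. The main obstacle is the polynomial control on all derivatives of $M$ needed in (i); once that is in hand, both the Schwartz regularity of $g$ and the identification $T_s(g) = K\ast g$ follow from standard Fourier-analytic machinery.
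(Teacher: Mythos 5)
Your proof is correct, and it takes a genuinely leaner route than the paper's in the second half. The paper establishes $g\in\Sch$ with essentially the same reasoning (though more tersely — it only remarks that $\xi\mapsto a(|\xi|^2)$ is polynomially bounded, whereas you correctly note that one must control \emph{all} derivatives of $(1+a(|\xi|^2))^{s/2}$, which requires combining $G_2$, $G_3$ and smoothness on compacts as you describe). For part (ii), however, the paper first invokes H\"ormander's Theorem 1.2 (\cite{Hor}) to produce an abstract tempered distribution $\Theta$ with $T_s(g)=\Theta\ast g$, and only afterwards identifies $\Theta=K$ by the same Fourier computation you perform. You observe — correctly — that once $K\in\ldos$ (Lemma \ref{K en L uno}) and $g\in\Sch$ are in hand, the convolution theorem alone gives $T_s(g)=\Fou^{-1}(m_{a,s})\ast g = K\ast g$ directly, making the detour through H\"ormander's characterization of translation-invariant operators unnecessary for this particular statement. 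What H\"ormander's theorem buys the paper is a uniqueness statement about the convolution kernel as a tempered distribution, which can be useful elsewhere, but for the conclusion of this proposition your direct argument suffices and is cleaner.

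One minor caution: "the convolution theorem applies" is doing some work here. The safe formulation is that for $K\in\ldos$ and $g\in\Sch$, the convolution $K\ast g$ is well-defined as a tempered distribution and $\Fou(K\ast g)=\Fou(K)\,\widehat{g}=m_{a,s}\,\widehat{g}$ holds in $\Schp$, which then identifies $K\ast g$ with $\Fou^{-1}(m_{a,s}\,\widehat{g})=T_s(g)$. Your remark that $m_{a,s}\,\widehat{g}\in L^1\cap L^2$ (by Cauchy--Schwarz and boundedness of $\widehat g$) is exactly what justifies taking this Fourier inverse pointwise, so the step is sound — it would just be worth spelling out in a final write-up.
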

\begin{proof}
By the remark above, if $u$ belongs to $D(A) \cap \Sch$, we can write $u$
as in (\ref{iso}) for some $g\in\lp$. Equation (\ref{iso}) then implies
that $$(1+a(|\xi|^2))^{s/2}\, \widehat{u} = \widehat{g}\; .$$
Since $u$ belongs to $D(A) \cap \Sch$ and the function $\xi \mapsto a(|\xi|^2)$ is polynomially bounded,
the left hand side of the above equation belongs to Schwartz space, and therefore so does $g$.

\smallskip

Now, by Proposition \ref{mfs multiplicador}, the
function  $M(\xi)=\dfrac{1}{(1+a(|\xi|^2))^{s/2}}$ is a Fourier
multiplier for $\lp$ and therefore (see Proposition \ref{Ts op invariante} and
remark below it) the unique continuous extension of the map
\begin{equation} \label{iso1}
g\mapsto T_s(g)=\Fou^{-1}\left(\dfrac{1}{(1+a(|\xi|^2))^{s/2}}\fou{g}(\xi)\right)
\end{equation}
is a translation invariant bounded operator from $\lp$ into $\lp$. In particular, if $g\in \Sch$, we compute
$T_s(g)$ using exactly (\ref{iso1}).  By Theorem 1.2 in H\"ormander's paper \cite{Hor}, we conclude that there exists
a unique distribution $\Theta \in \Schp$ such that
\begin{equation}
T_s(g) = \Theta \ast g
\end{equation}
for all $g \in \Sch$. We claim that $\Theta$ is exactly $K$ as defined in
(\ref{definicion K}). Indeed,
$$\Fou(T_s(g)) = M \, \widehat{g} = \widehat{\widehat{M}} \, \widehat{g}
= \Fou ( \widehat{M} \ast g ) \; .$$
Thus, $T_s(g) = \widehat{M} \ast g$ for $g \in \Sch$. By uniqueness of the
distribution $\Theta$, we have $\Theta = \widehat{M} = K$.
\end{proof}

We are now ready to make the following crucial definition:

\begin{defi} \label{defi D(A)2}
Given $s , \beta>0$ and $a$ in the class $\gbeta_s(\mathbb R^n)$, we
define the space $\Hsp$, $1 < p < \infty$, as follows:
$$\Hsp=\left\{ u\in\lp:\Fou^{-1}(\,(1+a(|\cdot|^2))^{s/2}\Fou(u) \,)\in\lp \right\}\; .$$
\end{defi}

\smallskip

Thus, if $s , \beta>0$ and $a \in \gbeta_s(\mathbb R^n)$ then, $D(A)=\Hsp$.

\begin{rem}
The $\ldos$ case of Definition $\ref{defi D(A)2}$ was treated in {\rm \cite{G-P-R}}.
Interestingly, that paper was motivated by our previous work on the non-linear
equation $$\Delta \exp(-c \Delta) u = U( \cdot , u) \; ,$$ see
{\rm \cite{G-P-R3,G-P-R5}}. In the context of the
present paper it is natural to consider the function $a(t) = t \exp(c\, t)$ as the corresponding symbol associated to the
operator $-\Delta \exp(-c \Delta)$, and to study the equation $\Delta \exp(-c \Delta) u = -U( \cdot , u)$ instead of
the equation above. It is not difficult to realize that the function
\begin{equation}\label{funcion m exp}
m_{exp}(x)=\dfrac{1}{1+|x|^2 e^{c |x|^2}}
\end{equation}
defines a Fourier multiplier for $\lp$, even though the function $t \mapsto t e^{c t}$ does not belongs to
any of the spaces of symbols $\gbeta_s(\mathbb R^n)$. It follows that we can also use the
symbol $a(t) = t \exp(c\, t)$ in Definitions $\ref{defi D(A)}$ and $\ref{defi D(A)2}$, thereby yielding a framework for an $\lp$-version
of the theory developed in {\rm \cite{G-P-R3,G-P-R5}}. We will consider this elsewhere, since it is beyond the scope of this paper.
\end{rem}

\smallskip

We endow the space $\Hsp$ with the norm $\| \cdot \|_{\Hsp}$ defined in (\ref{ecc}) below:

\begin{teo} \label{bi}
Let $s ,\beta>0$ and $a \in \gbeta_s(\mathbb R^n)$. The space $\Hsp$, $1 < p < \infty$,
endowed with the norm
\begin{equation} \label{ecc}
\|u\|_{\Hsp}=\|\Fou^{-1}( \,(1+a(|\cdot|^2))^{s/2}\Fou (u) \,) \|_{\lp} \; .
\end{equation}
is a Banach space.
\end{teo}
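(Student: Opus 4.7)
The plan is to reduce everything to the bijection between $\Hsp$ and $\lp$ recorded in Remark~\ref{te-ese}. That remark identifies $\Hsp = D(A)$ with the image $T_s(\lp)$ and states that $A$ and $T_s$ are inverse to each other; by construction the proposed norm is precisely $\|u\|_{\Hsp} = \|A(u)\|_{\lp}$. So the whole theorem is the statement that pulling back the $\lp$-norm by a linear bijection $A$ yields a Banach-space structure.

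First I would dispatch the norm axioms. Homogeneity and the triangle inequality are immediate from the linearity of $\Fou$, $\Fou^{-1}$ and multiplication by $(1+a(|\xi|^2))^{s/2}$, together with the corresponding properties of $\|\cdot\|_{\lp}$. For non-degeneracy, if $\|u\|_{\Hsp} = 0$ then $A(u) = 0$ in $\lp$; applying $T_s$ and using the identity $u = T_s(A(u))$ from Remark~\ref{te-ese} gives $u = T_s(0) = 0$.

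Completeness is the only substantive step. Given a Cauchy sequence $\{u_n\} \subseteq \Hsp$, the sequence $\{A(u_n)\}$ is Cauchy in $\lp$ by definition of the norm, hence converges to some $v \in \lp$ by completeness of $\lp$. I would then define $u := T_s(v) \in \lp$, which is legitimate because $T_s$ is bounded on $\lp$ by Proposition~\ref{Ts op invariante} (extended to all of $\lp$ as in the remark at the end of Subsection~3.1). By Remark~\ref{te-ese}, $u \in D(A) = \Hsp$ and $A(u) = v$. Finally, using the boundedness of $T_s$ and the identity $u_n = T_s(A(u_n))$,
$$
\|u_n - u\|_{\Hsp} = \|A(u_n) - A(u)\|_{\lp} = \|A(u_n) - v\|_{\lp} \to 0,
$$
which gives $u_n \to u$ in $\Hsp$.

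The only delicate point to check is that all manipulations are justified at the level of tempered distributions (so that writing $(1+a(|\xi|^2))^{s/2}\widehat{u}$ and inverting Fourier makes sense for $u \in \lp$); this is precisely the standing convention stated just before Definition~\ref{defi D(A)} and is guaranteed by the polynomial bounds on $a$ coming from $G_2$ and $G_3$. Once that is in place, the argument is a direct application of the invertibility of $A$ already established in Remark~\ref{te-ese} and requires no further estimates.
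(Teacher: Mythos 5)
Your proposal is correct and follows essentially the same route as the paper: both arguments exploit the bijection $T_s : \lp \to \Hsp$ with inverse $A = T_s^{-1}$ from Remark~\ref{te-ese}, observe that the proposed norm is precisely the pullback $\|u\|_{\Hsp} = \|T_s^{-1}(u)\|_{\lp}$, and conclude that $\Hsp$ is isometrically isomorphic to the Banach space $\lp$. The paper leaves the completeness argument as ``standard,'' whereas you spell out the Cauchy-sequence details; the substance is identical.
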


\noindent The proof is standard. We note that the operator $T_s$ is continuous on $\lp$ and bijective, see Proposition
\ref{Ts op invariante} and Remark \ref{te-ese}. Its inverse is, explicitly,
 $T_s^{-1} : u \mapsto \Fou^{-1}( \,(1+a(|\cdot|^2))^{s/2}\Fou (u) \,)$. Thus, (\ref{ecc}) can be written as
 \begin{equation} \label{neweq}
 \| u \|_{\Hsp} = \| T_s^{-1}(u) \|_{\lp}\; ,
 \end{equation}
so that $T_s^{-1}$ is continuous on $\Hsp$ and $\Hsp$ is indeed a Banach space.

\smallskip

\smallskip

The Banach space $\Hsp$ embeds continuously into $\lp$. Indeed:
 \begin{teo}\label{embed on Lp}
Let $s , \beta>0$ and $a\in\gbeta_s(\mathbb R^n)$. Then the inclusion map
$\Hsp \hookrightarrow \lp$, $1 < p < \infty$, is continuous.
\end{teo}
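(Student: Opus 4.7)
The plan is to unwind the definitions, identify the inclusion with the bounded Fourier multiplier operator $T_s$, and read off continuity from its $\lp$-boundedness.

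First, I would recall from Remark \ref{te-ese} and the discussion around \eqref{neweq} that for $u \in \Hsp$ the element $g := T_s^{-1}(u) = \Fou^{-1}\bigl((1+a(|\cdot|^2))^{s/2}\Fou(u)\bigr)$ belongs to $\lp$, and by definition
\begin{equation*}
\|u\|_{\Hsp} = \|T_s^{-1}(u)\|_{\lp} = \|g\|_{\lp}.
\end{equation*}
Equivalently, writing $u = T_s(g)$ with $g \in \lp$, the defining norm on $\Hsp$ is nothing but the $\lp$-norm of the preimage of $u$ under $T_s$.

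Next, I would invoke Proposition \ref{mfs multiplicador} together with the observation (following Stein) that $m_{a,s}$ being a Fourier multiplier for $\lp$ allows the a priori densely defined operator $T_s$ of Proposition \ref{Ts op invariante} to be extended to a bounded linear operator on all of $\lp$ with some norm $A = \|T_s\|_{\lp \to \lp} < \infty$. Applying this bound to $g$ yields
\begin{equation*}
\|u\|_{\lp} = \|T_s(g)\|_{\lp} \leq A\,\|g\|_{\lp} = A\,\|u\|_{\Hsp},
\end{equation*}
which is precisely the continuity of the inclusion $\Hsp \hookrightarrow \lp$.

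There is no genuine obstacle here: the result is a direct consequence of the bijectivity of $T_s$ on $\Hsp \to \lp$ and the $\lp$-boundedness of $T_s$ established by the Fourier multiplier theory in Proposition \ref{mfs multiplicador}. The only thing worth being careful about is ensuring that the inverse relationship $T_s^{-1} = A$ used in Remark \ref{te-ese} is genuinely an identification of norms, i.e.\ that the norm defined in \eqref{ecc} and the expression $\|T_s^{-1}(u)\|_{\lp}$ really coincide on all of $\Hsp$; this is exactly \eqref{neweq}, so no further work is needed.
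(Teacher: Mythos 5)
Your proof is correct and follows essentially the same route as the paper: both write $u = T_s\bigl(\Fou^{-1}((1+a(|\cdot|^2))^{s/2}\Fou(u))\bigr) = T_s(T_s^{-1}(u))$, then invoke the $\lp$-boundedness of $T_s$ (via Proposition \ref{mfs multiplicador} and its extension to all of $\lp$) together with the identity $\|u\|_{\Hsp} = \|T_s^{-1}(u)\|_{\lp}$ from (\ref{neweq}) to conclude $\|u\|_{\lp} \leq A\|u\|_{\Hsp}$. If anything, your presentation is slightly cleaner in that it explicitly records $g := T_s^{-1}(u) \in \lp$ before applying the multiplier bound, whereas the paper writes the same chain of equalities inline.
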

\begin{proof}
Observe that for all $u\in\lp$
\begin{eqnarray}
\|u\|_{\lp}&=&\left\|\Fou^{-1}\left(\dfrac{1}{(1+a(|\xi|^2))^{s/2}} \,
(1+a(|\xi|^2))^{s/2}\Fou(u)\right)\right\|_{\lp}\nn\\
&=&\left\|T_s\left(\Fou^{-1}((1+a(|\xi|^2))^{s/2}\Fou(u))\right)\right\|_{\lp} \; . \label{|Ts Fu|}
\end{eqnarray}
Since $T_s$ is a bounded operator on $\lp$, there exists a positive constant $C$ such that
\begin{eqnarray*}
\left\|T_s\left(\Fou^{-1}((1+a(|\xi|^2))^{s/2}\Fou(u))\right)\right\|_{\lp}
& \leq & C\|\Fou^{-1}(((1+a(|\xi|^2))^{s/2}\Fou(u))\| \\
 & = & C\|u\|_{\Hsp} \; .
\end{eqnarray*}
It follows that $\|u\|_{\lp}\leq C \|u\|_{\Hsp}\,$, and this ends the proof.
\end{proof}

\smallskip

We now present two further embedding results for the spaces $\Hsp$. We need two preliminary lemmas.

\begin{lem} \label{3.10}
Let $\beta, s>0$ and $a\in\gbeta_s(\mathbb R^n)$. If $r>0$, then the
function $\varphi$ defined by
\begin{equation}\label{function varphi}
\varphi(x)=\dfrac{(1+|x|^2)^{r/2}}{(1+a(|x|^2))^{\frac{1}{2}(s+\frac{2r}{\beta})}}
\end{equation}
is a Fourier multiplier for $\lp$, $1 < p < \infty$.
\end{lem}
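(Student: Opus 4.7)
The plan is to apply Lemma \ref{gui} exactly as in the proof of Proposition \ref{mfs multiplicador}: I will show that for every ordered multi-index $I=(i_1,\dots,i_k)$ with distinct components (i.e., every $\alpha \leq (1,\dots,1)$), the quantity $|x_{i_1}\cdots x_{i_k}\,\partial_I \varphi(x)|$ is uniformly bounded on $\mathbb{R}^n$. The whole proof rests on the factorisation
\begin{equation*}
\varphi(x) = (1+|x|^2)^{r/2}\,m_{a,\,s+\frac{2r}{\beta}}(x),
\end{equation*}
which reduces the problem to estimating each factor separately and combining via Leibniz's rule.

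The first step is to record the standard pointwise estimate
\begin{equation*}
\bigl|x^J \partial_J (1+|x|^2)^{r/2}\bigr| \leq C_J\,(1+|x|^2)^{r/2},
\end{equation*}
valid for every multi-index $J$, which follows by induction from $\partial_{x_i}(1+|x|^2)^{r/2}=r x_i(1+|x|^2)^{r/2-1}$ and the trivial bound $|x^J|\le (1+|x|^2)^{|J|/2}$. The second step is to apply the proof of Proposition \ref{mfs multiplicador} verbatim with the choice $\mu=s+2r/\beta$. That proof establishes, for $|x|$ large enough and any ordered $L\subseteq\{1,\dots,n\}$,
\begin{equation*}
\bigl|x^L \partial_L m_{a,\,s+\frac{2r}{\beta}}(x)\bigr| \leq C\,(1+|x|^2)^{\frac{\beta}{4}\left(-\mu+\frac{|L|}{n}s\right)} = C\,(1+|x|^2)^{\frac{\beta}{4}\left(-s-\frac{2r}{\beta}+\frac{|L|}{n}s\right)}.
\end{equation*}

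The third step is to apply Leibniz's rule to $\partial_I \varphi$, multiply by $x^I$, and distribute the factors of $x_{i_p}$ appropriately among the two factors: for each $J \subseteq I$,
\begin{equation*}
x^I \partial_I \varphi(x) = \sum_{J \subseteq I} c_{J}\,\bigl(x^J \partial_J(1+|x|^2)^{r/2}\bigr)\,\bigl(x^{I\setminus J}\partial_{I\setminus J} m_{a,\,s+\frac{2r}{\beta}}(x)\bigr).
\end{equation*}
Combining the two estimates above, each summand is bounded (for $|x|$ large) by
\begin{equation*}
C\,(1+|x|^2)^{\frac{r}{2}+\frac{\beta}{4}\left(-s-\frac{2r}{\beta}+\frac{|I\setminus J|}{n}s\right)} = C\,(1+|x|^2)^{\frac{\beta s}{4}\left(\frac{|I\setminus J|}{n}-1\right)},
\end{equation*}
and since $|I\setminus J|\leq |I|\leq n$, the exponent is non-positive, yielding the desired uniform bound for large $|x|$. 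Finally, conditions $G_1$ and the smoothness of $(1+|x|^2)^{r/2}$ give continuity on compact sets, so the bound extends to all of $\mathbb{R}^n$, and Lemma \ref{gui} concludes that $\varphi$ is a Fourier multiplier for $L^p(\mathbb{R}^n)$.

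The main obstacle is purely bookkeeping: verifying that the exponent cancellation works cleanly for \emph{every} partition $J\subseteq I$, not only for $J=\emptyset$ or $J=I$. The critical identity is that the $r/2$ produced by the polynomial factor exactly cancels the $-r/2$ coming from the shift $\mu=s+2r/\beta$ in the multiplier, leaving behind the same exponent $\frac{\beta s}{4}(\frac{|I\setminus J|}{n}-1)\leq 0$ that appeared in Proposition \ref{mfs multiplicador}. This is precisely why $2r/\beta$, and not some other correction, is the correct shift in the definition of $\varphi$.
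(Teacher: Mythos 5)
Your argument is correct and follows the paper's proof essentially line by line: the same factorisation $\varphi = (1+|x|^2)^{r/2}\,m_{a,\,s+2r/\beta}$, Leibniz's rule, the bound already obtained in the proof of Proposition \ref{mfs multiplicador} with $\mu=s+2r/\beta$, and the observation that the exponents cancel to $\frac{\beta s}{4}(\frac{|I\setminus J|}{n}-1)\leq 0$. The only (cosmetic) difference is that you distribute $x^I$ as $x^J\cdot x^{I\setminus J}$ so that each Leibniz factor comes pre-packaged with its weight, whereas the paper keeps $x^\alpha$ intact and exploits the explicit form $\partial_\gamma(1+|x|^2)^{r/2}\sim x^\gamma(1+|x|^2)^{r/2-|\gamma|}$; both yield identically the same exponent arithmetic.
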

\begin{proof}
In this proof we use the multi-indexes conventions of \cite{Sai}.
It is clear that  the function $\varphi$ can be written as
\begin{equation}
\varphi(x)= (1+|x|^2)^{r/2}\,m_{a,s+\frac{2r}{\beta}}(x)  \; ,
\end{equation}
in which $m_{a,s+\frac{2r}{\beta}}$ is the function defined in (\ref{funcion m}) for $\mu = s+2r/\beta$.
Thus, for each multi-index $\alpha\leq(1,\ldots,1)$ we have
\begin{equation*}
x^{\alpha}D^{\alpha}(\varphi(x))=x^{\alpha}D^{\alpha}\left( (1+|x|^2)^{r/2}\,m_{a,s+\frac{2r}{\beta}}(x) \right)
\end{equation*}
and by Leibniz's formula (see Theorem 1.2 in \cite{Sai}) it follows that
\begin{equation*}
x^\alpha D^{\alpha}(\varphi(x)) =
x^\alpha \sum_{\gamma}D^{\gamma}\left((1+|x|^2)^{r/2}\right)\,D^{\alpha-\gamma}\left(m_{a,s+\frac{2r}{\beta}}(x)\right)\; ,
\end{equation*}
where $\gamma\leq \alpha$ and we have omitted constant numbers which are not essential for this proof. Then,
we have
\begin{eqnarray*}
\left| x^{\alpha}D^{\alpha}\left(\varphi(x)\right) \right| & = &
\left| x^\alpha \right| \left| \sum_{\gamma}x^{\gamma}\,(1+|x|^2)^{\frac{r}{2} - |\gamma|}\,
D^{\alpha-\gamma}\left(m_{a,s+\frac{2r}{\beta}}(x)\right) \right| \\
 & \leq & \left| x^\alpha \right|  \sum_{\gamma}\left| x^{\gamma}\right| \,(1+|x|^2)^{\frac{r}{2} - |\gamma|}\,
\left| D^{\alpha-\gamma}\left(m_{a,s+\frac{2r}{\beta}}(x)\right) \right| \\
 & \leq & \sum_\gamma (1+|x|^2)^{|\alpha|/2 + |\gamma|/2}\,(1+|x|^2)^{\frac{r}{2} - |\gamma|}
          \left| D^{\alpha-\gamma}\left(m_{a,s+\frac{2r}{\beta}}(x)\right) \right| \; .
\end{eqnarray*}
Now we use inequality (\ref{aux222}) with $k = |\alpha - \gamma|$ and $\mu = s+2r/\beta$ in order to bound the derivatives
$D^{\alpha-\gamma} \left( m_{a, s+\frac{2r}{\beta}} \right)$. We obtain, for $|x|$ large enough,
\begin{eqnarray}
\left| x^{\alpha}D^{\alpha}\left(\varphi(x)\right) \right| & \leq &
     \sum_\gamma (1+|x|^2)^{|\alpha|/2 + |\gamma|/2}\,(1+|x|^2)^{\frac{r}{2} - |\gamma|}
(1+|x|^2)^{-\frac{|\alpha-\gamma|}{2}-\frac{\beta}{4}\left( s+\frac{2r}{\beta}\right)+|\alpha-\gamma| \frac{\beta s}{4 n}}
\nonumber \\
 & = & \sum_\gamma (1+ |x|^2)^{-\frac{\beta s}{4} + |\alpha - \gamma|\frac{\beta s}{4 n}} \; . \label{aux333}
\end{eqnarray}
Since $|\gamma|\leq |\alpha|\leq n$, we obtain that the exponents appearing in (\ref{aux333}) are at most equal to
zero. Thus, $\left| x^{\alpha}D^{\alpha}\left(\varphi(x)\right) \right| \leq C$ for $|x|$ large enough and for all
multi-indexes $\alpha\leq (1,\ldots,1)$. Since the function $a$ satisfies $G_1$, we obtain $\varphi \in C^n(\Rn)$, and so we can bound
$\left| x^{\alpha}D^{\alpha}\left(\varphi(x)\right) \right|$ uniformly on $\Rn \setminus \{0\}$. It follows from Lemma \ref{gui} that
the function $\varphi$ is a Fourier multiplier for $\lp$.
\end{proof}

\smallskip

\begin{lem}
Let us consider the operator $\Lambda$ on $\lp$, $1 < p < \infty$, defined by
\begin{equation}
\Lambda(u)=\Fou^{-1}(\varphi(\xi)\Fou(u))\; ,
\end{equation}
where $\varphi$ is the function defined in $(\ref{function varphi})$. Then, there exists a positive constant $C$
such that
\begin{equation}
\|\Lambda (u)\|_{\lp}\leq C\|u\|_{\lp}\; .
\end{equation}
\end{lem}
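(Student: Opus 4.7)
The plan is to observe that this lemma is essentially a direct consequence of the preceding Lemma~\ref{3.10} together with Definition~\ref{def multip}. Since Lemma~\ref{3.10} establishes that the function $\varphi$ defined in (\ref{function varphi}) is a Fourier multiplier for $\lp$ for every $1 < p < \infty$, there exists, by the very definition of a Fourier multiplier, a constant $C > 0$ such that
$$\|T_\varphi(f)\|_{\lp} \leq C \|f\|_{\lp}$$
for all $f \in \ldos \cap \lp$. Noting that the operator $\Lambda$ is nothing but $T_\varphi$ in the notation of Definition~\ref{def multip}, this immediately yields the desired bound on the dense subspace $\ldos \cap \lp$.

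To obtain the inequality on all of $\lp$, I would then invoke the density argument already used in the paper (see the remark right after Proposition~\ref{defi D(A)4}, or Stein \cite[p.~94]{Ste}): the operator $\Lambda$ extends uniquely to a bounded linear operator on $\lp$ with the same constant $C$. Following the standard convention adopted by the authors, we keep denoting this extension by $\Lambda$, and the estimate $\|\Lambda(u)\|_{\lp} \leq C\|u\|_{\lp}$ then holds for every $u \in \lp$.

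There is no real obstacle here, since all the work has already been done in Lemma~\ref{3.10}; this lemma merely repackages that multiplier statement into a continuity statement for the corresponding operator, presumably because $\Lambda$ will be invoked by name in subsequent embedding theorems (e.g.\ relating $\Hsp$ to fractional Sobolev spaces $\hrp$). Accordingly, the proof should be kept very short, amounting to little more than a citation of the previous lemma and of the definition of a Fourier multiplier.
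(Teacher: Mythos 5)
Your proposal is correct and matches the paper's approach exactly: the paper disposes of this lemma with the single remark that it "follows from the fact that $\varphi$ is a Fourier multiplier for $\lp$," which is precisely Lemma~\ref{3.10} plus the definition of a multiplier, followed by the standard unique bounded extension from $\ldos\cap\lp$ to $\lp$.
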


\smallskip

This Lemma follows from the fact that $\varphi$ is a Fourier multiplier for
$\lp$. Our promissed embedding results for the spaces $\Hsp$ are:

\smallskip

\begin{teo}\label{embed on Hs}
Let $s , \beta > 0$ and $a\in\gbeta_s(\mathbb R^n)$ be fixed. Then, for each $r \geq 0$
the following continuous embedding
\begin{equation}
\mathcal{H}^{s+\frac{2r}{\beta},p}(a)\hookrightarrow H^{r,p}(\Rn) \; ,
\end{equation}
$1 < p < \infty$, holds, in which $H^{r,p}(\Rn)$ is classical fractional Sobolev space as defined for example in {\rm \cite{Tay3}}.
\end{teo}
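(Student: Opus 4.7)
The plan is to read off the embedding directly from the fact that the quotient multiplier $\varphi$ introduced in Lemma \ref{3.10} is exactly what intertwines the two norms. Recall that
\[
\|u\|_{H^{r,p}(\Rn)} = \bigl\|\Fou^{-1}\bigl((1+|\xi|^2)^{r/2}\,\widehat{u}\,\bigr)\bigr\|_{\lp}
\]
and
\[
\|u\|_{\mathcal{H}^{s+\frac{2r}{\beta},p}(a)} = \bigl\|\Fou^{-1}\bigl((1+a(|\xi|^2))^{\frac{1}{2}(s+\frac{2r}{\beta})}\,\widehat{u}\,\bigr)\bigr\|_{\lp},
\]
so the natural idea is to factor the symbol $(1+|\xi|^2)^{r/2}$ as $\varphi(\xi)\cdot(1+a(|\xi|^2))^{\frac{1}{2}(s+\frac{2r}{\beta})}$.

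First I would take $u\in\mathcal{H}^{s+\frac{2r}{\beta},p}(a)$ and set $v := \Fou^{-1}\bigl((1+a(|\xi|^2))^{\frac{1}{2}(s+\frac{2r}{\beta})}\,\widehat{u}\,\bigr)$, which by Definition \ref{defi D(A)2} lies in $\lp$ and whose $\lp$-norm is precisely $\|u\|_{\mathcal{H}^{s+\frac{2r}{\beta},p}(a)}$. Next, I would observe (at the level of tempered distributions, using that Fourier transform is an isomorphism on $\Schp$ and that the relevant symbols are polynomially bounded) that
\[
\Fou^{-1}\bigl((1+|\xi|^2)^{r/2}\,\widehat{u}\,\bigr) = \Fou^{-1}\bigl(\varphi(\xi)\,\widehat{v}\,\bigr) = \Lambda(v),
\]
where $\Lambda$ is the operator of the previous Lemma.

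Then the embedding follows immediately: the previous Lemma gives a constant $C>0$ such that $\|\Lambda(v)\|_{\lp}\leq C\|v\|_{\lp}$, so
\[
\|u\|_{H^{r,p}(\Rn)} = \|\Lambda(v)\|_{\lp} \leq C\|v\|_{\lp} = C\,\|u\|_{\mathcal{H}^{s+\frac{2r}{\beta},p}(a)},
\]
which establishes the continuity of the inclusion.

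There is no serious obstacle; the work has already been done in Lemma \ref{3.10}, where the elliptic growth condition $G_2$ and the derivative estimates $G_3$ were used to verify the Mihlin-type hypothesis (\ref{aux1}) for $\varphi$. The only minor point to be careful about is justifying the symbol factorisation at the distributional level, which is routine because $(1+a(|\cdot|^2))^{\frac{1}{2}(s+\frac{2r}{\beta})}$ is $C^n$ and polynomially bounded (by $G_1$ together with $G_3$), so multiplication by $\varphi(\xi)$ on the Fourier side is well-defined on $\Schp$ and commutes with the other multipliers involved.
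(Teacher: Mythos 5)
Your proposal is correct and follows essentially the same route as the paper: both factor the Bessel potential symbol as $\varphi(\xi)\,(1+a(|\xi|^2))^{\frac{1}{2}(s+\frac{2r}{\beta})}$ and invoke the multiplier bound for $\varphi$ from Lemma \ref{3.10} (via the boundedness of $\Lambda$). The only cosmetic difference is that you name the auxiliary function $v=\Fou^{-1}\bigl((1+a(|\cdot|^2))^{\frac{1}{2}(s+\frac{2r}{\beta})}\widehat{u}\bigr)$ explicitly, whereas the paper keeps everything inline; you might also note, as the paper does, that Proposition \ref{contencion gbetas} guarantees $a\in\gbeta_{s+\frac{2r}{\beta}}(\mathbb R^n)$ so that the source space is well defined.
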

\begin{proof}
Let us set $s_0 = s+\frac{2r}{\beta}$. Then, Proposition \ref{contencion gbetas}
implies that $a \in \gbeta_{s_0}(\mathbb{R}^n)$. We now observe that for $u\in \mathcal{H}^{s_0,p}(a)$ we have
\begin{eqnarray*}
\|u\|_{\hrp}&=&\|\Fou^{-1}\left( (1+|\xi|^2)^{r/2}\Fou(u)\right) \|_{\lp}\\
&=&\left\|\Fou^{-1}\left(\dfrac{(1+|\xi|^2)^{r/2}}{(1+a(|\xi|^2))^{s_0/2}}(1+a(|\xi|^2))^{s_0/2}\Fou(u)\right)\right\|_{\lp}\\
&=&\left\|\Lambda\left[ \Fou^{-1}\left( (1+a(|\xi|^2))^{s_0/2}\Fou(u) \right) \right]\right\|_{\lp}\\
&\leq&C\|\Fou^{-1}( (1+a(|\xi|^2))^{s_0/2}\Fou(u))\|_{\lp}\\
&=&C\|u\|_{\mathcal{H}^{s_{0},p}(a)} \; ,
\end{eqnarray*}
in which we have used that
$\Fou^{-1}\left( (1+a(|\xi|^2))^{s_0/2}\Fou(u) \right) \in \lp$ since
$u\in \mathcal{H}^{s_0,p}(a)$.
\end{proof}

\smallskip

\begin{cor}
Let $s , \beta > 0$ and $a\in\gbeta_s(\mathbb R^n)$ be fixed. If
$u \in \mathcal{H}^{s+\frac{2r}{\beta},p}(a)$, $1 < p < \infty$, and $r > n/p$, then the function $u$ is
continuous and bounded.
\end{cor}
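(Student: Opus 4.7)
The plan is to reduce this corollary directly to the classical fractional Sobolev embedding, using Theorem \ref{embed on Hs} as the bridge between the $\mathcal{H}^{s,p}(a)$ scale and the standard $H^{r,p}(\mathbb{R}^n)$ scale.

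First I would apply Theorem \ref{embed on Hs} with the given value of $r$: since $u \in \mathcal{H}^{s+\frac{2r}{\beta},p}(a)$, the continuous embedding
\[
\mathcal{H}^{s+\frac{2r}{\beta},p}(a) \hookrightarrow H^{r,p}(\mathbb{R}^n)
\]
immediately gives $u \in H^{r,p}(\mathbb{R}^n)$, together with the norm estimate $\|u\|_{H^{r,p}} \leq C \|u\|_{\mathcal{H}^{s+2r/\beta,p}(a)}$.

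Next, I would invoke the classical Sobolev embedding for fractional Bessel-potential spaces (see, e.g., the reference to Taylor's book \cite{Tay3} that is already in the previous theorem): under the hypothesis $r > n/p$, one has the continuous inclusion $H^{r,p}(\mathbb{R}^n) \hookrightarrow C_b(\mathbb{R}^n)$, the space of bounded continuous functions on $\mathbb{R}^n$. Composing the two embeddings yields that $u$ (more precisely, its equivalence-class representative) is continuous and bounded, with a uniform bound $\|u\|_\infty \leq C' \|u\|_{\mathcal{H}^{s+2r/\beta,p}(a)}$.

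There is no real obstacle here: the non-trivial content was already absorbed into Theorem \ref{embed on Hs} (the Fourier-multiplier estimate of Lemma \ref{3.10}), and the remainder is a textbook application of the classical Sobolev embedding. The only point worth flagging is the standard identification of $H^{r,p}(\mathbb{R}^n)$ with the Bessel potential space on which the embedding $H^{r,p} \hookrightarrow C_b$ for $r > n/p$ is stated in \cite{Tay3}; once this identification is made explicit, the proof closes in two lines.
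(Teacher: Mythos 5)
Your proposal is correct and matches the paper's proof exactly: both apply Theorem \ref{embed on Hs} to land in $H^{r,p}(\mathbb{R}^n)$ and then invoke the classical embedding $H^{r,p}(\mathbb{R}^n) \subseteq C(\mathbb{R}^n) \cap L^\infty(\mathbb{R}^n)$ for $r > n/p$, citing Taylor's treatise.
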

\begin{proof}
The corollary follows from the above theorem and the fact that $H^{r,p}(\mathbb{R}^n) \subseteq C(\mathbb{R}^n) \cap L^\infty(\mathbb{R}^n)$,
see for instance Taylor's treatise \cite[Chp. 13, Proposition 6.3]{Tay3}.
\end{proof}

\smallskip

\begin{teo} \label{embed on Hs 2}
Let $s , \beta > 0$ and $a\in\gbeta_s(\mathbb R^n)$ be fixed. For each $\delta\geq0$ the following continuous embedding
holds:
\begin{equation} \label{h}
\mathcal{H}^{2s+\delta,p}(a)\hookrightarrow \Hsp\; , \quad \quad 1 < p < \infty \; .
\end{equation}
\end{teo}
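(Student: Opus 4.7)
The plan is to reduce the embedding (\ref{h}) to a direct application of Proposition \ref{mfs multiplicador}, by exhibiting the ``difference'' of the two weights as a Fourier multiplier. Concretely, I would write, for $\xi \in \mathbb{R}^n$,
\begin{equation*}
(1+a(|\xi|^2))^{s/2} \;=\; \frac{1}{(1+a(|\xi|^2))^{(s+\delta)/2}}\;\cdot\;(1+a(|\xi|^2))^{(2s+\delta)/2}
\;=\; m_{a,s+\delta}(\xi)\,(1+a(|\xi|^2))^{(2s+\delta)/2},
\end{equation*}
with $m_{a,s+\delta}$ as in (\ref{funcion m}). Since $\delta \geq 0$ gives $\mu := s+\delta \geq s$, Proposition \ref{mfs multiplicador} applies and $m_{a,s+\delta}$ is a Fourier multiplier for $L^p$; hence the associated operator $T_{s+\delta} = T_{m_{a,s+\delta}}$ extends to a bounded linear operator on $\lp$ with some norm $C > 0$.

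Next I would verify the embedding on the level of functions. Let $u \in \mathcal{H}^{2s+\delta,p}(a)$, so by Definition \ref{defi D(A)2} the function
\begin{equation*}
g \;:=\; \Fou^{-1}\!\bigl((1+a(|\cdot|^2))^{(2s+\delta)/2}\,\Fou(u)\bigr)
\end{equation*}
belongs to $\lp$ and $\|u\|_{\mathcal{H}^{2s+\delta,p}(a)} = \|g\|_{\lp}$. Applying $T_{s+\delta}$ to $g$ and using the factorization above (at the level of tempered distributions, as in the convention noted just before Definition \ref{defi D(A)}) gives
\begin{equation*}
T_{s+\delta}(g)
\;=\; \Fou^{-1}\!\bigl(m_{a,s+\delta}(\xi)\,(1+a(|\xi|^2))^{(2s+\delta)/2}\,\Fou(u)\bigr)
\;=\; \Fou^{-1}\!\bigl((1+a(|\xi|^2))^{s/2}\,\Fou(u)\bigr),
\end{equation*}
and the left-hand side lies in $\lp$ by boundedness of $T_{s+\delta}$. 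This is exactly the condition that $u \in \Hsp$.

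Finally, from the same identity I would read off the norm inequality
\begin{equation*}
\|u\|_{\Hsp} \;=\; \|T_{s+\delta}(g)\|_{\lp} \;\leq\; C\,\|g\|_{\lp} \;=\; C\,\|u\|_{\mathcal{H}^{2s+\delta,p}(a)},
\end{equation*}
which is the claimed continuous embedding. There is no real obstacle here: the argument is essentially the same as in Theorem \ref{embed on Hs}, with the Sobolev weight $(1+|\xi|^2)^{r/2}$ replaced by the ``self-similar'' weight $(1+a(|\xi|^2))^{s/2}$, so that one does not even need Lemma \ref{3.10} -- the multiplier property for $m_{a,s+\delta}$ supplied by Proposition \ref{mfs multiplicador} suffices. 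The only point requiring a bit of care is the distributional justification of the factorization, which is covered by the standing identification of $\lp$ functions with tempered distributions made before Definition \ref{defi D(A)}.
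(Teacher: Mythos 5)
Your proposal is correct and follows essentially the same route as the paper's proof: both factor $(1+a(|\xi|^2))^{s/2}$ as $m_{a,s+\delta}(\xi)\,(1+a(|\xi|^2))^{(2s+\delta)/2}$ and then invoke the $\lp$-boundedness of $T_{s+\delta}$, supplied by Proposition \ref{mfs multiplicador} with $\mu = s+\delta \geq s$. The only small extra detail the paper records (via Proposition \ref{contencion gbetas}) is that $a \in \gbeta_{2s+\delta}(\mathbb{R}^n)$, which ensures the source space $\mathcal{H}^{2s+\delta,p}(a)$ is itself well-defined per Definition \ref{defi D(A)2}.
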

\begin{proof}
As in the previous theorem, Proposition \ref{contencion gbetas}
implies that $a \in \gbeta_{2 s + \delta}\,$. Now we observe that for all
$u \in \mathcal{H}^{2s+\delta,p}(a)$ we have
\begin{eqnarray*}
\|u\|_{\Hsp}&=&\left\|\Fou^{-1}\left(\dfrac{(1+a(|\xi|^2))^{s/2}}{(1+a(|\xi|^2))^{\frac{1}{2}(2s+\delta)}}(1+a(|\xi|^2))^{\frac{1}{2}(2s+\delta)}
\Fou(u)\right)\right\|_{\lp}\\
&=&\left\|\Fou^{-1}\left(\dfrac{1}{(1+a(|\xi|^2))^{(s+\delta)/2}}(1+a(|\xi|^2))^{\frac{1}{2}(2s+\delta)}\Fou(u)\right)\right\|_{\lp}\\
&=&\left\|T_{s+\delta}\left(\Fou^{-1}\left[ (1+a(|\xi|^2))^{\frac{1}{2}(2s+\delta)}\Fou(u) \right]\right)\right\|_{\lp}\\
&\leq&C \left\| \Fou^{-1}\left[(1+a(|\xi|^2))^{\frac{1}{2}(2s+\delta)}\Fou(u)\right] \right\|_{\lp}\\
&=&C\|u\|_{\mathcal{H}^{2s+\delta,p}(a)} \; .
\end{eqnarray*}
\end{proof}

\section{Nonlinear Equations associated to $a(-\Delta)$}

The main aim of this section is the study in $\lp$ of the equation
\begin{equation}\label{ec lineal}
 ( a(-\Delta) + 1 )^{s/2}\, u = V(\cdot, u) \; ,
\end{equation}
in which $s > 0$ and the non-linearity $V$ satisfies some technical hypotheses to be specified below.
This equation encompasses several special cases of interest. If $s = 2$, Equation (\ref{ec lineal}) becomes
$$
( a(-\Delta) + 1 )\, u = V(u) \;
$$
so that, setting $a(-\Delta) + 1 = f(\Delta)$, we arrive at Equation (13)
of \cite{G-P-R}. Also, Lemma \ref{xxx} implies that special cases of Equation (\ref{ec lineal}) are equations
involving the fractional Laplace operator of the form
\begin{equation}\label{lapfrac}
 \left[ 1 + (-\Delta + m^2)^\gamma \right]^{s/2} \, u = V(\cdot, u) \; .
\end{equation}

Further examples of operators $a(-\Delta)$ of interest for Physics appear for instance in \cite{ABFS};
we will consider explicit examples of equation (\ref{ec lineal}) in Section 5.

\vspace{10pt}

\smallskip

We begin by solving the linear problem
\begin{equation}\label{ec lineal0}
 ( a(-\Delta) + 1 )^{s/2}\, u = g
\end{equation}
on the space $\Hsp\,$:

\begin{teo}\label{Teorema ec lineal}
Let $s , \beta > 0$ and $a\in\gbeta_s(\mathbb R^n)$. For each $g\in\lp$, $1 < p < \infty$, there exists an unique solution $u_g\in\Hsp$
to the linear equation $(\ref{ec lineal0})$.
Moreover, we have
\begin{equation}\label{igual normas ec lineal}
\|u_g\|_{\Hsp}=\|g\|_{\lp}\; .
\end{equation}
\end{teo}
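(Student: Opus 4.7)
The plan is to solve the equation by inverting the operator $A$, using the fact that the bounded Fourier multiplier $T_s$ from Proposition \ref{Ts op invariante} is, by Remark \ref{te-ese}, precisely the inverse of $A$ on its domain $D(A) = \mathcal{H}^{s,p}(a)$.

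Given $g \in L^p(\mathbb{R}^n)$, the natural candidate for the solution is
\[
u_g := T_s(g) = \Fou^{-1}\!\left(\dfrac{1}{(1+a(|\xi|^2))^{s/2}}\,\fou{g}\right).
\]
First, since $T_s$ extends to a bounded operator on $\lp$ (Proposition \ref{Ts op invariante} together with the remark immediately following it), we have $u_g \in \lp$. Then, Remark \ref{te-ese} states that an element of $\lp$ lies in $D(A) = \mathcal{H}^{s,p}(a)$ if and only if it is of the form $T_s(h)$ for some $h \in \lp$; this immediately gives $u_g \in \mathcal{H}^{s,p}(a)$ and, moreover, $A(u_g) = T_s^{-1}(T_s(g)) = g$, so that $u_g$ solves (\ref{ec lineal0}).

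Uniqueness follows from the same remark: if $u_1, u_2 \in \mathcal{H}^{s,p}(a)$ both satisfy the equation, then $A(u_1 - u_2) = 0$. Since $T_s^{-1} = A$ is a bijection between $\mathcal{H}^{s,p}(a)$ and $\lp$, we conclude $u_1 = u_2$. Finally, the norm identity (\ref{igual normas ec lineal}) is an immediate consequence of (\ref{neweq}):
\[
\|u_g\|_{\mathcal{H}^{s,p}(a)} \;=\; \|T_s^{-1}(u_g)\|_{\lp} \;=\; \|A(u_g)\|_{\lp} \;=\; \|g\|_{\lp}.
\]

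There is no real obstacle here; everything has been prepared by the functional-calculus developed in Section 3. The only point that requires mild care is invoking the right statement from Remark \ref{te-ese}, namely that $T_s$ is a bijection between $\lp$ and $\mathcal{H}^{s,p}(a)$ with inverse $A$, which simultaneously delivers existence, uniqueness, and the isometry.
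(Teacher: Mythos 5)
Your proof is correct and follows essentially the same route as the paper: both define the candidate solution as $u_g = T_s(g) = \Fou^{-1}\bigl(\Fou(g)/(1+a(|\xi|^2))^{s/2}\bigr)$ and then invoke the functional-calculus machinery of Section 3 (Remark \ref{te-ese}, the definition of $\Hsp$, and the norm identity (\ref{neweq})) to get membership in $\Hsp$, uniqueness, and the isometry. The paper's own proof is terser, but the underlying argument is the same.
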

\begin{proof}
Equation (\ref{ec lineal0}) is equivalent to
$$\Fou^{-1}\left( (1+a(|\xi|^2))^{s/2}\Fou(u) \right) = g \, ,$$
and therefore it is easy see that the solution $u_g$  is given by
 $$u_g=\Fou^{-1}\left(\dfrac{\Fou(g)}{(1+a(|\xi|^2))^{s/2}}\right)\; .$$
This solution belongs to $\Hsp$ by Theorem \ref{bi}, and (\ref{igual normas ec lineal}) holds.
\end{proof}

\smallskip

\smallskip

We are ready to study non-linear equations. First of all, we state the following
elementary proposition, showing that in some cases we can obtain existence,
{\em uniqueness} and regularity of solutions.

\begin{prop}\label{lema prop func V}
Let us assume that $1<p<\infty$. Let $\beta>2n/p ,\, s > 0$ and $a\in\gbeta_{s}$ be fixed.
Suppose further that the function $V:\Rn\times \mathbb R \tiend \mathbb C$ is such that $V(\cdot,0)\in\lp$,
and that there exists a function $h\in L^{\infty}(\Rn)$ such that
\begin{equation} \label{des}
|V(x,y_1)-V(x,y_2)|\leq h(x)|y_1-y_2|\; .
\end{equation}
\noindent
Then, for all $\delta > 0$ small enough, the equation
\begin{equation}\label{ec U_delta}
[1 + a(-\Delta)]^{s/2} (u) = \delta\, V(\cdot,u)
\end{equation}
has a unique solution $u\in\mathcal{H}^{s,p}(a)$.
\end{prop}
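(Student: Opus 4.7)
The plan is a Banach fixed-point argument on the Banach space $\mathcal{H}^{s,p}(a)$ itself. Let $T_s$ be the bounded operator of Proposition \ref{Ts op invariante}; by Remark \ref{te-ese}, it inverts the formal operator $[1+a(-\Delta)]^{s/2}$ on its domain $D(A)=\mathcal{H}^{s,p}(a)$, so (\ref{ec U_delta}) is equivalent to the fixed-point equation $u=\Phi(u)$ where
\begin{equation*}
\Phi(u) := \delta\, T_s\bigl( V(\cdot, u) \bigr).
\end{equation*}

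First I would verify that $\Phi$ maps $\mathcal{H}^{s,p}(a)$ into itself. For $u \in \mathcal{H}^{s,p}(a)$, the continuous embedding of Theorem \ref{embed on Lp} gives $u \in \lp$, and the Lipschitz hypothesis (\ref{des}) yields
\begin{equation*}
\| V(\cdot, u) \|_{\lp} \leq \| V(\cdot, 0) \|_{\lp} + \| h \|_{\linf}\, \| u \|_{\lp} < \infty,
\end{equation*}
so that $V(\cdot,u) \in \lp$. By Theorem \ref{Teorema ec lineal}, $T_s\bigl(V(\cdot,u)\bigr)$ then belongs to $\mathcal{H}^{s,p}(a)$ and satisfies the norm identity $\| T_s g \|_{\mathcal{H}^{s,p}(a)} = \| g \|_{\lp}$, which will be the key tool for the contraction estimate.

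Second, for $u_1, u_2 \in \mathcal{H}^{s,p}(a)$, combining that norm identity, the Lipschitz bound (\ref{des}), and the embedding constant $C>0$ of $\mathcal{H}^{s,p}(a) \hookrightarrow \lp$ produces
\begin{equation*}
\| \Phi(u_1) - \Phi(u_2) \|_{\mathcal{H}^{s,p}(a)} = \delta\, \| V(\cdot, u_1) - V(\cdot, u_2) \|_{\lp} \leq \delta\, \| h \|_{\linf}\, \| u_1 - u_2 \|_{\lp} \leq C\delta\, \| h \|_{\linf}\, \| u_1 - u_2 \|_{\mathcal{H}^{s,p}(a)}.
\end{equation*}
Choosing $\delta>0$ small enough that $C\delta\| h \|_{\linf} < 1$ turns $\Phi$ into a strict contraction on the complete space $\mathcal{H}^{s,p}(a)$ (Theorem \ref{bi}), so Banach's fixed-point theorem delivers the unique fixed point $u \in \mathcal{H}^{s,p}(a)$, which is the desired solution to (\ref{ec U_delta}).

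I do not foresee any real obstacle here: every ingredient (the norm identity for $T_s : \lp \to \mathcal{H}^{s,p}(a)$, the continuous embedding $\mathcal{H}^{s,p}(a) \hookrightarrow \lp$, and completeness of $\mathcal{H}^{s,p}(a)$) has already been established in the previous section, and the boundedness of $h$ makes the Lipschitz estimate pass straight through the $\lp$-norm without any delicate interpolation. I note that the standing hypothesis $\beta > 2n/p$ is not actually required for this bare-bones existence and uniqueness statement; it would only become relevant if one wished to extract additional regularity (for instance continuity and boundedness) of the solution from the Sobolev-type embedding of Theorem \ref{embed on Hs} and its corollary.
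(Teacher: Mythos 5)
Your proof is correct and follows essentially the same route as the paper: reformulate the equation as a fixed-point problem for $u \mapsto \delta\,T_s(V(\cdot,u))$ on $\mathcal{H}^{s,p}(a)$, use the norm identity from Theorem \ref{Teorema ec lineal} together with the Lipschitz bound and the embedding $\mathcal{H}^{s,p}(a)\hookrightarrow\lp$ to obtain a contraction for small $\delta$, and invoke Banach's fixed-point theorem. Your observation that $\beta>2n/p$ is not needed for bare existence and uniqueness also matches the paper, which uses that hypothesis only in the subsequent Remark \ref{4.3} to upgrade the solution to $H^{\beta/2,p}(\Rn)\subset C(\Rn)\cap L^\infty(\Rn)$.
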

\begin{proof}
We define the function $R : \mathcal{H}^{s,p}(a) \rightarrow \mathcal{H}^{s,p}(a)$ which assigns to
each $u$ the unique solution $w_u$ of the linear problem $[1 + a(-\Delta)]^{s/2} (w_u) = \delta\, V(\cdot,u)$.
This function is well-defined: inequality (\ref{des}) implies
$$
|V(x,y)|  \leq  |V(x,y)-V(x,0)| + |V(x,0)| \leq h(x) |y| + |V(x,0)| \; ,
$$
and so $|V(x,y)|^p \leq C(p)\, ( h(x)^p |y|^p + |V(x,0)|^p )$ in which $C(p)$ is a constant number.
Our hypotheses on $h$ and $V(\cdot , 0)$ imply that $V(\cdot , u) \in \lp$ whenever
$u \in \lp$. Moreover, we observe that $R$ is a contraction for adequate choices of $\delta$. Indeed, first of all
we have:
$$
\|V(\cdot,u_1)-V(\cdot,u_2)\|_{\lp} \leq \|h\|_{L^{\infty}(\Rn)} \|u_1-u_2\|_{\lp}
\leq C \, \|h\|_{L^{\infty}(\Rn)} \, \|u_1-u_2\|_{\Hsp} \; ,
$$
because of the continuous embedding $\Hsp\hookrightarrow \lp$ proven in Theorem \ref{embed on Lp}. Now, using
Theorem \ref{bi} and (\ref{neweq}), we obtain
\begin{eqnarray*}
\|R(u_1) - R(u_2)\|_{\Hsp} & = & \|w_{u_1} - w_{u_2} \|_{\Hsp} \; = \; \|T_s^{-1}(w_{u_1} - w_{u_2}) \|_{\lp} \\
 & = & \delta\, \| V(\cdot,u_1) - V(\cdot,u_2) \|_{\lp} \\
 & \leq & \delta \, C \, \|h\|_{L^{\infty}(\Rn)} \, \|u_1-u_2\|_{\Hsp}  \; ,
\end{eqnarray*}
so that $R$ is a contraction for $\delta < 1/(2\,C \, \|h\|_{L^{\infty}(\Rn)})$. Existence and uniqueness of solution to (\ref{ec U_delta})
is then consequence of the Banach fixed point theorem.
\end{proof}

\begin{rem} \label{4.3}
Theorem $\ref{embed on Lp}$ implies that the solution $u$ obtained in the above proposition is in $\lp$. Even more,
we note that if we put $r = \beta/2\,$ in Theorem $\ref{embed on Hs}$ and we use Proposition $\ref{contencion gbetas}$, we obtain that
the solution $u$ belongs to the Sobolev space $H^{\beta/2 , p}(\Rn)$. Since $\beta/2 > n/p$ we obtain $H^{\beta/2,p}(\mathbb{R}^n)
\subseteq C(\mathbb{R}^n) \cap L^\infty(\mathbb{R}^n)$ {\rm (\cite[Chp. 13, Proposition 6.3]{Tay3})} and so $u$ is bounded and continuous.
\end{rem}

The next theorem establishes existence and regularity of solutions in a less
restrictive framework.

\begin{teo}\label{Teo ec no lineal}
Let $s , \beta > 0$ and $a\in\gbeta_s(\mathbb R^n)$ be given, and let $1<p<\infty$.
Given $\delta>0\,$, we consider the equation
\begin{equation} \label{nonlin1}
[1 + a(-\Delta)]^{s/2} (u) = \delta \varphi(x)V(x,u)
\end{equation}
in which $\varphi\in C^{\infty}_0$ and $V\in C^1(\Rn\times \mathbb R)$. Let us assume that there exist constant numbers $\alpha > 1$ and $C > 0$,
and a function $h\in\lp$ such that the function $V$ satisfies the following estimates
\begin{eqnarray}
\left| V(x,y) \right| + \left| \dfrac{\partial}{\partial x_i}V(x,y) \right|&\leq&C(h(x)+|y|^{\alpha})\hspace{1cm}i=1,2,\ldots,n \label{cond 1} \\
&&\nn\\
\left|\dfrac{\partial }{\partial y}V(x,y)\right|&\leq & C (1+|y|^{\alpha}).
                                                                 \label{cond 2}
\end{eqnarray}
If we take $m > 0$ and the parameters $\beta,s,m$ satisfy $m > n/(\alpha\, p)$ and
$s > 4m\alpha/\beta$,
then, for $\delta$ sufficiently small, Equation $(\ref{nonlin1})$
has a solution $u\in\Hsp$.
\end{teo}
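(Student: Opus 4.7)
The approach I would take is a Banach contraction argument on a closed ball of $\Hsp$. For $u \in \Hsp$ define $R(u)$ as the unique element of $\Hsp$ satisfying $[1+a(-\Delta)]^{s/2}(R(u)) = \delta\,\varphi(x)\,V(x,u)$, which is furnished by Theorem~\ref{Teorema ec lineal} as soon as the right-hand side is in $\lp$, together with the isometric identity $\|R(u)\|_{\Hsp} = \delta\,\|\varphi V(\cdot,u)\|_{\lp}$. The entire proof is then reduced to bounding $\|\varphi V(\cdot,u)\|_{\lp}$ by a suitable (super)linear function of $\|u\|_{\Hsp}$, and to showing that the induced map is a contraction on some ball.

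The crucial ingredient is a Sobolev-type embedding that makes $u$ integrable to the power $\alpha p$ on the compact set $\mathrm{supp}\,\varphi$ (indeed, ideally bounded there); this is where the hypotheses $m > n/(\alpha p)$ and $s > 4m\alpha/\beta$ intervene. My plan is to extend the multiplier estimate of Lemma~\ref{3.10} to the function $\xi \mapsto (1+|\xi|^2)^{m/2}/(1+a(|\xi|^2))^{s/2}$: condition $G_2$ gives boundedness at infinity as soon as $m$ is small compared with $\beta s/2$, and the margin provided by $s > 4m\alpha/\beta$ (together with $\alpha > 1$) is precisely what allows the higher-order derivative estimates of Lemma~\ref{3.10} to close. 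This would yield a continuous embedding $\Hsp \hookrightarrow H^{m,p}(\Rn)$; combined with the threshold condition $m > n/(\alpha p)$, the classical Sobolev embedding then produces $\|u\|_{L^{\alpha p}(\mathrm{supp}\,\varphi)} \leq C\,\|u\|_{\Hsp}$ (and, in the sharper regime, even an $L^\infty_{\mathrm{loc}}$ bound on $\mathrm{supp}\,\varphi$), which together with~\eqref{cond 1} gives the estimate $\|\varphi V(\cdot,u)\|_{\lp} \leq C(\|h\|_{\lp} + \|u\|_{\Hsp}^\alpha)$.

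With this in hand, the self-map and contraction properties are routine. On the ball $B_\eta = \{u \in \Hsp : \|u\|_{\Hsp} \leq \eta\}$, the bound $\|R(u)\|_{\Hsp} \leq \delta\,C(\|h\|_{\lp} + \eta^\alpha)$ can be arranged to be $\leq \eta$ by choosing $\eta$ small and then $\delta$ small. For the contraction estimate I would combine the mean value theorem with~\eqref{cond 2} to write $|V(x,u_1) - V(x,u_2)| \leq C(1 + \max(|u_1|,|u_2|)^\alpha)|u_1 - u_2|$, use the local $L^\infty$ bound on $\mathrm{supp}\,\varphi$ to control $\max(|u_i|)$ on $\mathrm{supp}\,\varphi$ by a constant multiple of $\eta$, and then the continuous inclusion $\Hsp\hookrightarrow \lp$ of Theorem~\ref{embed on Lp} to obtain $\|R(u_1) - R(u_2)\|_{\Hsp} \leq \delta\,C'(1 + \eta^\alpha)\,\|u_1 - u_2\|_{\Hsp}$, a strict contraction for $\delta$ small enough. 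Banach's fixed-point theorem then produces the required $u \in \Hsp$.

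The main obstacle will be the multiplier/embedding step: the embeddings already stated (Theorems~\ref{embed on Hs} and~\ref{embed on Hs 2}) do not, as written, embed $\Hsp$ into any $H^{r,p}(\Rn)$ with $r > 0$, so one is forced to push Lemma~\ref{3.10} one step further by allowing the numerator and denominator exponents in $\psi(\xi) = (1+|\xi|^2)^{m/2}/(1+a(|\xi|^2))^{s/2}$ to vary independently, and to verify that the Guillemou--Mikhlin derivative estimates of Lemma~\ref{gui} continue to close under the precise budget $s > 4m\alpha/\beta$, $m > n/(\alpha p)$. Once this extended multiplier lemma is in hand, the remainder of the argument is a direct fixed-point computation.
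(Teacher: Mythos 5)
There is a genuine gap in the embedding step. You aim to prove $\Hsp \hookrightarrow H^{m,p}(\Rn)$ and then conclude from $m>n/(\alpha p)$ that $\Hsp$ embeds into $L^{\alpha p}(\mathrm{supp}\,\varphi)$, and ``in the sharper regime'' into $L^\infty_{\mathrm{loc}}$. Neither conclusion follows from the hypothesis on $m$. The Sobolev embedding $H^{m,p}(\Rn)\hookrightarrow L^{\alpha p}(\Rn)$ in the subcritical regime $mp<n$ requires $m\geq r_\alpha := n(\alpha-1)/(\alpha p)$, and the $L^\infty$ embedding requires $mp>n$, i.e.\ $m>n/p$; for $\alpha>2$, both of these thresholds exceed $n/(\alpha p)$, so $m>n/(\alpha p)$ alone is not enough. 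The hypotheses do leave room, since $s>4m\alpha/\beta$ gives $\beta s/4>m\alpha>n/p$, but you must then target a Sobolev index strictly larger than $m$ --- the paper uses $r_\alpha+m$, which satisfies $(r_\alpha+m)p>n$ precisely because $m>n/(\alpha p)$, and that is what makes the $L^\infty\cap L^{\alpha p}$ control on $\mathrm{supp}\,\varphi$ available. Your extended multiplier lemma would have to be formulated for $(1+|\xi|^2)^{t/2}/(1+a(|\xi|^2))^{s/2}$ with $t=r_\alpha+m$ (or anything with $tp>n$), not $t=m$.

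Separately, you take a genuinely different route at the fixed-point stage: a Banach contraction on a small ball of $\Hsp$, where the paper uses the Schauder fixed point theorem. The paper's chain $\Hsp\hookrightarrow\mathcal{H}^{2(r_\alpha+m)/\beta,p}(a)\hookrightarrow H^{r_\alpha+m,p}(\Rn)\hookrightarrow H^{r_\alpha,p}(\Rn)$ is assembled from Theorems~\ref{embed on Hs 2} and~\ref{embed on Hs} rather than from a new multiplier lemma; and the bound on $\partial_{x_i}V$ in~(\ref{cond 1}), which plays no role in your contraction estimate, is what the paper uses (together with~(\ref{cond 2}) and Rellich--Kondrachov on $\mathrm{supp}\,\varphi$) to show that the Nemytskii map sends bounded sets to relatively compact sets, so that Schauder applies. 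If the embedding step is repaired as above, your contraction argument would go through and would even give local uniqueness and dispense with the $\partial_{x_i}V$ hypothesis; the price is that the smallness requirement on $\delta$ becomes tied to the contraction constant $\delta C'(1+(C\eta)^\alpha)<1$, whereas the Schauder route only needs $\delta$ small enough for the self-map bound, and is the approach the authors chose.
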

\begin{proof}
Let us set $r_\alpha =\dfrac{n(\alpha -1)}{p\alpha}$.
We claim that if $u \in H^{r_\alpha ,\, p}(\Rn)$ is given, then the function
$ V(\cdot,u)\in \lp$. In fact, since $r_\alpha \, p < n$, the continuous inclusion
\begin{equation} \label{aste0}
H^{r_\alpha ,\, p}(\Rn) \hookrightarrow L^{\alpha p}(\Rn)
\end{equation}
holds (see for instance \cite[Chp. 13, Proposition 6.4]{Tay3}), and therefore
$$
\| u \|_{L^{\alpha\, p}(\Rn)} \leq C \| u \|_{H^{r_\alpha\, p}(\Rn)}
$$
for all $u \in H^{r_\alpha ,\, p}(\Rn)$. An easy calculation using (\ref{cond 1})
now yields
\begin{eqnarray}
\|V(\cdot,u)\|_{\lp}^p & \leq & C \left(\|h\|_{\lp}^p+
                     \int_{\Rn}\left|u(x)^\alpha\right|^p dx \right) \nonumber\\
                     & = & C \left(\|h\|_{\lp}^p+\|u\|_{L^{\alpha p}}^{\alpha\, p} \right) \, \label{h2}
\end{eqnarray}
and the claim follows. Since $\varphi \in L^\infty(\Rn)$, we also conclude that
the function $\varphi \, V(\cdot,u)$ belongs to $\lp$.

\smallskip

Now we need to use the following chain of continuous inclusions: we have (because of Theorem \ref{embed on Hs 2},
Theorem \ref{embed on Hs} and a result appearing in Lions' paper, \cite[p. 320]{Li})
\begin{equation} \label{h1}
\mathcal{H}^{s,p}(a) \hookrightarrow \mathcal{H}^{s/2 - \epsilon,p}(a) =
\mathcal{H}^{2(r_\alpha + m)/\beta,p}(a) \hookrightarrow H^{r_\alpha + m,\, p}(\Rn) \hookrightarrow H^{r_\alpha ,\, p}(\Rn) \; ,
\end{equation}
in which $m$ is as in the enunciate of the theorem and $\epsilon$ is determined by
the equation $s/2 - \epsilon = 2( r_\alpha + m)/\beta$. By the hypotheses of the
theorem, we obtain $\epsilon > 0$.

Next, we set
$A_0=\{u\in H^{r_\alpha +m,\, p}(\Rn) : \|u\|_{H^{r_\alpha +m,\, p}(\Rn)}\leq 1\}$
and we define the operator $\mathcal R:A_0\tien A_0$ as follows:
\begin{equation*}
\mathcal R(u)=w
\end{equation*}
in which $w$ is the unique solution to the linear equation
\begin{equation}\label{ec L2 u=d phi V}
[1 + a(-\Delta)]^{s/2} w = \delta\varphi V(\cdot,u)\; .
\end{equation}
Since $\delta \varphi V(\cdot,u)\in\lp$, Theorem \ref{Teorema ec lineal} implies
that there exists an unique solution $w$ to the equation (\ref{ec L2 u=d phi V})
and therefore $\mathcal R$ is well-defined. We now check that its range is indeed $A_0$ if
we choose $\delta$ appropriately.

Since $a$ belongs to $\gbeta_s(\mathbb R^n)$, Theorem \ref{Teorema ec lineal} tells us that the solution
$ w = {\mathcal R}(u)$ belongs to $\Hsp$. Inclusions (\ref{h1}) imply
that $w$ belongs to $H^{r_\alpha+m,p}(\Rn)$. We have that
$$\| {\mathcal R}(u) \|_{H^{r_\alpha +m ,\, p}(\Rn)} = \|w\|_{H^{r_\alpha+m,\, p}(\Rn)}\; , $$
and then the following inequalities hold:
\begin{eqnarray}
\|w\|_{H^{r_\alpha +m,\, p}(\Rn)} &\leq & C\|w\|_{\Hsp} \nonumber \\
&=&C\|\delta\varphi V(\cdot,u)\|_{\lp} \nonumber \\
&\leq &C\delta\|\varphi\|_{\linf}\left(\|h\|_{\lp}^p +
\widetilde C\|u\|_{H^{r_\alpha +m ,\, p}(\Rn)}^{\alpha p} \right) \; , \label{aste}
\end{eqnarray}
in which we have used (\ref{h2}) and the inclusions (\ref{h1}). Since $u\in A_0$, we conclude that
\begin{equation*}
\|\mathcal R(u)\|_{H^{r_\alpha +m,\, p}(\Rn)}\leq C\delta\|\varphi\|_{\linf}\left( \|h\|_{\lp}^p +\widetilde C\right).
\end{equation*}
Hence, since the right side of the above inequality does not depend on $u$, there exists a sufficiently small $\delta$ such that for all $u\in A_0$
\begin{equation*}
\|\mathcal R(u)\|_{H^{r_\alpha +m,\, p}(\Rn)}\leq 1 \; ,
\end{equation*}
and so the operator $\mathcal R$ is well defined.

\smallskip

We now show that the operator $\mathcal R$ has a fixed point on $A_0$.
We use the Schauder fixed point Theorem.

First we check continuity of $\mathcal R$. let us note that
\begin{eqnarray*}
|V(x,u_1(x))-V(x,u_2(x))|&=&\left|\int_0^1\dfrac{d}{dt}\left(V(x,tu_1(x)+(1-t)u_2(x))\right)dt\right|\\
&=&\left|\int_0^1 D_y V(x,tu_1(x)+(1-t)u_2(x))(u_1(x)-u_2(x))dt\right|\\
&\leq&|u_1(x)-u_2(x)|\int_0^1\left|D_y V(x,tu_1(x)+(1-t)u_2(x))dt\right|\\
&\leq&|u_1(x)-u_2(x)|\, C\int_0^1|1+|tu_1(x)+(1-t)u_2(x)|^{\alpha}|dt\\
&\leq&C\, |u_1(x)-u_2(x)|\int_0^1 |1+t|u_1(x)|^{\alpha}+(1-t)|u_2(x)|^{\alpha}|dt\\
&\leq& C\, |u_1(x)-u_2(x)|\int_0^1 ( 1+|u_1(x)|^{\alpha}+|u_2(x)|^{\alpha} ) dt\\
&=& C\, |u_1(x)-u_2(x)|\, (1+|u_1(x)|^{\alpha}+|u_2(x)|^{\alpha}) \; .
\end{eqnarray*}
We use this observation to estimate the difference $\varphi\, V(\cdot,u_1)-\varphi\,V(\cdot,u_2)$.
We make use of the fact that $\varphi$ has compact support, let us say $K \subset \Rn$.
\begin{eqnarray*}
\|\varphi\, V(\cdot,u_1)-\varphi\,V(\cdot,u_2)\|_{\lp}^p&=&\int_{K}|\varphi(x)|^p\, |V(x,u_1(x))-V(x,u_2(x))|^p dx\\
&\leq&C\,\| \varphi \|_{\linf} \int_{K}|u_1(x)-u_2(x)|^p\left(1+|u_1(x)|^{\alpha}+|u_2(x)|^{\alpha}\right)^p dx\\
&\leq& C\,\| \varphi \|_{\linf} \int_{K}|u_1(x)-u_2(x)|^p\left(1+|u_1(x)|^{\alpha\, p}+|u_2(x)|^{\alpha\, p}\right) dx\\
& \leq & C\,\| \varphi \|_{\linf} \| u_1 -u_2 \|^p_{L^p(K)} \left(1 + \|u_1\|^{\alpha\, p}_{\linf} +\|u_2\|^{\alpha\, p}_{\linf} \right) \; ,
\end{eqnarray*}
in which $C$ is a generic constant and we have used that $u_1, u_2 \in H^{r_\alpha + m , p}(\Rn)$ implies
$u_1, u_2 \in \linf$ (indeed, the estimate on $m$ appearing in the hypothesis of the theorem implies that $r_\alpha + m > n/p$, and therefore
$H^{r_\alpha + m , p}(\Rn) \subset C(\Rn) \cap \linf$, as explained in Remark \ref{4.3} following \cite{Tay3}).

Hence, we obtain the following inequalities, in which we have used the continuous inclusions (\ref{h1}):
\begin{eqnarray*}
\|\mathcal{R}(u_1)-\mathcal{R}(u_2)\|_{H^{r_\alpha +m,\, p}(\Rn)}&=&
\| w_1 - w_2 \|_{H^{r_\alpha +m,\, p}(\Rn)} \; \leq \;  C \, \| w_1 - w_2 \|_{\mathcal{H}^{s,p}(a)} \\
& = & C \, \|\delta\, \varphi\, \left( V(\cdot,u_1)-V(\cdot,u_2) \right) \|_{\lp} \\
&\leq&C \, \delta  \| \varphi \|_{\linf} \| u_1 -u_2 \|^p_{L^p(K)} \left(1 + \|u_1\|^{\alpha\, p}_{\linf} +\|u_2\|^{\alpha\, p}_{\linf} \right) \; .
\end{eqnarray*}
The second equality holds because of (\ref{igual normas ec lineal}) and
the fact that $v = w_1 - w_2$ is the solution to the linear problem
$[1 + a(-\Delta)]^{s/2} (v) =  V(\cdot,u_1)-V(\cdot,u_2)\,$.

\smallskip

\smallskip

Now we use that $\| u_1 -u_2 \|^p_{L^p(K)} \leq C \| u_1 -u_2 \|^p_{L^{\alpha p}(K)}$
---because $p < \alpha\,p$, since $\alpha > 1$--- and also that $\| u_1 -u_2 \|^p_{L^{\alpha p}(K)} \leq \| u_1 -u_2 \|^p_{L^{\alpha p}(\Rn)}$.
Thus, the continuous inclusions (\ref{h1}) and (\ref{aste0}) imply
\[
\| u_1 -u_2 \|^p_{L^p(K)} \leq C_1 \| u_1 -u_2 \|^p_{L^{\alpha p}(\Rn)} \leq
C_2 \| u_1 -u_2 \|^p_{H^{r_\alpha +m,\, p}(\Rn)}\; .
\]
We conclude that if $\| u_1 -u_2 \|^p_{H^{r_\alpha +m,\, p}(\Rn)} \tien 0$ then
$\|\mathcal{R}(u_1)-\mathcal{R}(u_2)\|_{H^{r_\alpha +m,\, p}(\Rn)}\tien0$. This fact shows the continuity of the operator $\mathcal{R}$.

\smallskip

\smallskip

Now we show that  $\mathcal{R}$ is a compact operator. Let us consider a bounded sequence $\{u_k\}$  in $H^{r_\alpha + m,p}(\Rn)$.
We will show that the sequence $\{\mathcal{R}(u_k)\}$ has a convergent subsequence in $H^{r_\alpha + m,p}(\Rn)$. Since
$\varphi\in C^\infty_0(\Rn)$, there exists $R>0$ such that $supp(\varphi)\subset B(0,R)$. Now, for each $k\in\mathbb{N}$ let us define
$$g_k(x)=\left\{\begin{array}{cc}
\delta\varphi(x)V(x,u_k(x))&\mbox{ if }x\in B(0,R)\\
0&\mbox{ if }x\in\Rn\setminus B(0,R).
\end{array}\right.$$
We check that the sequence $\{g_k\}_{k \in \mathbb{N}}$ is bounded in
$H^{1,p}(\Rn)$.
We have,
\begin{eqnarray*}
\|g_k\|_{H^{1,p}(\Rn)}&=&\delta\|\varphi V(\cdot,u_k)\|_{H^{1,p}(B)}\\
&=&\delta\left(\|\varphi V(\cdot,u_k)\|_{L^{p}(B)}+\sum_{i=1}^n\|\, D_i \,[ \varphi V(\cdot,u_k) ] \, \|_{L^{p}(B)}\right)^{1/p} \; .
\end{eqnarray*}
Since for each $k$ we have $u_k\in\ H^{r_\alpha + m,p}(\Rn)$, inequality (\ref{aste}) implies
 $$\|\varphi V(\cdot,u_k)\|_{L^{p}(B)}<\infty \; ,$$
and therefore we only need to show that  $\sum_{i=1}^n\|D_i [\varphi V(\cdot,u_k)] \|_{L^{p}(B)}<\infty$. By the chain rule for weak derivatives,
see \cite{Ev}, we have
\begin{eqnarray*}
D_i\left[\varphi(x)V(x,u_k(x))\right]&=&D_i(\varphi(x))V(x,u_k(x))+  \varphi(x)D_i\left(V(x,u_k(x))\right)\\
& = & D_i(\varphi(x))V(x,u_k(x))+ \\
&   & \varphi(x)\left[D_i V(x,u_k(x))+ D_yV(x,u_k(x))D_iu_k(x)\right]
\end{eqnarray*}
for $i=1\ldots n$, and so,
\begin{eqnarray*}
&&\|D_i\varphi V(\cdot,u_k)\|_{L^p(B)}^p\\
&=&\int_{B}\left|D_i(\varphi(x))V(x,u_k(x))+\varphi(x)\left[D_i V(x,u_k(x))+D_yV(x,u_k(x))D_iu_k(x)\right]\right|^p dx\\
&\leq& C(p)\left[ \int_{B}\left|D_i(\varphi(x))V(x,u_k(x))\right|^p dx+\int_{B}|\varphi(x)|^p|D_i V(x,u_k(x))+D_yV(x,u_k(x))D_iu_k(x)|^p\right]\\
&\leq&C(p)\, \|D_i (\varphi)\|_{L^\infty(B)}^p \|V(\cdot,u_k)\|_{L^p(B)}^p +\\
&&\left. C_1(p)\|\varphi\|_{L^\infty(B)}^p\left(\|D_i V(\cdot,u_k)\|_{L^p(B)}^p+\|D_yV(\cdot,u_k)D_i u_k\|_{L^p(B)}^p\right)\right. .
\end{eqnarray*}
Since (\ref{cond 1}) holds, we have
\begin{eqnarray}
\|D_i V(\cdot,u_k)\|_{L^p(B)}^p&=&\int_{B}|D_i V(x,u_k(x))|^pdx\nn\\
&\leq&C\int_{B}|h(x)|^p+|u_k(x)|^{\alpha p}\nn\\
&=&C\left(\|h\|_{L^p(B)}^p+\|u_k\|_{L^{\alpha p}}^{\alpha p}\right)\nn\\
&\leq& C\left(\|h\|_{L^p(B)}^p+\widetilde{C}\|u_k \|_{H^{r_\alpha +m,p}(B)}^{\alpha}\right),
\end{eqnarray}
On the other hand, we note that by (\ref{cond 2})
\begin{eqnarray}
\|D_y V(\cdot,u_k)D_i u_k\|_{L^p(B)}^p&=&\int_{B}|D_y V(x,u_k(x))D_i u_k(x)|^pdx\nn\\
&\leq&C\int_{B} \left| (1+|u_k(x)|^{\alpha})D_iu_k(x) \right|^p dx  \nn\\
&=&C\int_{B} \left| D_i u_k(x)+|u_k(x)|^{\alpha}D_iu_k(x) \right|^pdx \nn\\
&\leq & C_2(p)\left(\|D_i u_k\|_{L^p(B)}+\||u_k|^{\alpha p}\|_{L^{\infty}(B)}\|D_i u_k\|_{L^p(B)}^p\right)\nn\\
&=& C_2(p)\|D_i u_k\|_{L^p(B)}^p\left(1+\|\,|u_k|^{\alpha p}\|_{L^{\infty}(B)}\right).\label{Dy V en Lp}
\end{eqnarray}
Since the sequence $\{u_k\} \subset H^{r_\alpha + m,p}(\mathbb{R}^n)$ is bounded, there exists
$M > 0$ so that  $$\|D_i u_k\|_{L^{p}(B)}^p \leq M$$ for all $k$. Moreover, as already noticed above, $H^{r_\alpha + m,p}(\mathbb{R}^n) \subset
C(\Rn) \cap L^\infty(\Rn)$, and therefore we can conclude that
$\left(1+\||u_k|^{\alpha p}\|_{L^{\infty}(B)}\right)<\infty\,$ uniformly in $k$.
Hence by (\ref{Dy V en Lp}) we have
$$\|D_y V(\cdot,u_k)D_i u_k\|_{L^p(B)}<\infty.$$

Thus, we have proven that $\{g_k\}_{k \in \mathbb{N}}$ is a bounded sequence in $H^{1,p}(B)$. By the Rellich-Kondrachov
theorem (see for instance \cite[p. 274]{Ev}) the embedding $H^{1,p}(B)\hookrightarrow L^p(B)$ is compact. Hence, there exists
a subsequence $\{g_{k_i}\}$ of $\{g_k\}_{k \in \mathbb{N}}$ which converges in
$L^{p}(B)$.

This fact allows us to show that the sequence $\{w_{k_i}\}=\mathcal{R}(u_{k_i})$ is a Cauchy sequence  in
$H^{r_\alpha + m,p}(\mathbb{R}^n)$. Indeed, we use the embedding $\Hsp\hookrightarrow H^{r_\alpha + m,p}(\mathbb{R}^n)$,
see (\ref{h1}). Then, it follows that:
\begin{eqnarray*}
\|w_{k_i}-w_{k_j}\|_{H^{r_\alpha + m,p}(\Rn)}&\leq&C\|w_{k_i}-w_{k_j}\|_{\Hsp}\\
&=&C\|\delta\varphi V(\cdot,u_{k_i})-\delta\varphi V(\cdot,u_{k_j})\|_{\lp}\\
&\leq&C\|g_{k_i}-g_{k_j}\|_{L^p(B)}.
\end{eqnarray*}
Hence, the sequence $\{\mathcal{R}(u_{k_i})\}$ is convergent in
the Banach space $H^{r_\alpha + m,p}(\mathbb{R}^n)$. We have proven that the operator
$\mathcal{R}$ is compact. By Schauder's theorem, there
exists at least one fixed point $u_0$ of $\mathcal{R}$, and hence
there exists a solution in $\Hsp$ to the equation (\ref{nonlin1}).
\end{proof}

\smallskip

\smallskip

We finish this section with an existence proof in the radial case. Our main technical
references for this part of the paper is Lions' classic paper \cite{Li} and the recent treatise \cite{Tri}. Let us assume that $t$ and $p$
are real numbers such that $\,t\, p > n$. Then,
Proposition 6.3 of \cite{Tay3} tells us that $H^{t,p}(\Rn) \subset C(\Rn) \cap L^\infty(\Rn)$. We define, after \cite{Li},
$$H^{t,p}_r(\Rn) = \{ u \in H^{t,p}(\Rn) : u \mbox{ is spherically symmetric} \}\; .$$
Moreover, if $\beta\,s\, p > 4 n$ and $a \in \gbeta_s(\mathbb R^n)$, we define the following
closed subspace of $\mathcal{H}^{s,p}(a)$:
$$ \mathcal{H}^{s,p}_r(a) = \{ u \in \mathcal{H}^{s,p}(a) : u \mbox{ is spherically symmetric} \}\; .$$
This definition makes sense because Theorem \ref{embed on Hs} implies that if $\beta\,s\, p > 4 n$, then $\Hsp$ is contained
in  $C(\Rn) \cap L^\infty(\Rn)$. We also need the standard definition
$L^{p}_r (\Rn) = \overline{C^\infty_{0,r}(\Rn)}^{\lp}\,$.

\smallskip

\smallskip

We begin by stating the following corollary of Theorem \ref{Teorema ec lineal}:

\begin{cor} \label{radlin}
Let $\beta>0$, $s > 0$ and $a\in\gbeta_s(\mathbb R^n)$ be fixed. If $g\in\lp$ is a spherically symmetric function,
then the solution to the linear equation $(\ref{ec lineal0})$ is also spherically symmetric.
\end{cor}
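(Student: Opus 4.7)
The plan is to exploit the explicit representation of the solution given by Theorem \ref{Teorema ec lineal}, namely
$$
u_g = \mathcal{F}^{-1}\!\left(\frac{\mathcal{F}(g)}{(1+a(|\xi|^2))^{s/2}}\right) = T_s(g),
$$
and to show that $T_s$ commutes with the action of the orthogonal group on $\lp$. Once this is established, the conclusion is immediate: if $g$ is spherically symmetric, then $R \cdot g = g$ for every $R \in O(n)$ (where $(R\cdot f)(x) := f(R^{-1}x)$), and hence $R\cdot u_g = R \cdot T_s(g) = T_s(R \cdot g) = T_s(g) = u_g$, so that $u_g$ is spherically symmetric.

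The key observation for the commutation step is that the multiplier
$$
m_{a,s}(\xi) = \frac{1}{(1+a(|\xi|^2))^{s/2}}
$$
depends only on $|\xi|$ and is therefore invariant under the action of $O(n)$, a fact already noted in the proof of Proposition \ref{Ts op invariante}. Combined with the standard identity $\widehat{R\cdot f} = R \cdot \widehat{f}$ (valid on $\Sch$ and extending by density to $\lp \cap \ldos$ via the Fourier multiplier framework of Definition \ref{def multip}), we obtain
$$
T_s(R\cdot g) = \mathcal{F}^{-1}\!\bigl( m_{a,s} \cdot (R\cdot \widehat{g})\bigr) = \mathcal{F}^{-1}\!\bigl( R \cdot (m_{a,s}\cdot \widehat{g}) \bigr) = R\cdot T_s(g),
$$
which is the required rotation-invariance.

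I do not foresee any real obstacle here: the argument is an essentially direct rotational analogue of the translation-invariance already established in Proposition \ref{Ts op invariante}, and no new estimates or multiplier theorems are needed. The only mild point of care is to make sure the identity $\widehat{R\cdot f} = R \cdot \widehat{f}$ is applied at a level of generality consistent with the extension of $T_s$ to all of $\lp$ discussed in the remark following Proposition \ref{Ts op invariante}; this is handled by first checking the identity on $\Sch$ (or on $\lp \cap \ldos$) and then extending by continuity, since both sides of $T_s(R\cdot g) = R\cdot T_s(g)$ depend continuously on $g \in \lp$.
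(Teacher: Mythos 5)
Your proposal is correct and takes essentially the same approach as the paper: both start from the explicit solution formula $u_g = T_s(g)$ and exploit the radiality of the multiplier $m_{a,s}$ together with the compatibility of the Fourier transform with rotations. The paper phrases this as "$\Fou(g)$ is a spherically symmetric tempered distribution, hence so is $m_{a,s}\Fou(g)$, hence so is $u_g$," while you phrase it as $T_s$ commuting with the $O(n)$-action; these are the same argument in two equivalent forms, with your version adding a slightly more explicit density step.
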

\begin{proof}
We have that the solution to equation (\ref{ec lineal0}) is given by
$$u_g=\Fou^{-1}\left(\dfrac{\Fou(g)}{(1+a(|\xi|^2))^{s/2}}\right)\; .$$
Since $g$ is a spherically symmetric function, then $\Fou (g)$ is a spherically symmetric tempered distribution (see for
instance \cite[p. 148]{Kan}), and therefore $\Fou(g)/(1+a(|\xi|^2))^{s/2}$ is a spherically symmetric tempered distribution as well.
Hence, $u_g$ is spherically symmetric.
\end{proof}

Now we state a remark on continuous inclusions. Assertions (\ref{h22}) and (\ref{inclusion}) below will be crucial
for the enunciate and proof of the next theorem.

\begin{rem} \label{4.6}
Let us assume that $\alpha>1$, $\beta>0$, and suppose that $a\in\gbeta_s(\mathbb R^n)$.
Let us set $r_\alpha = n(\alpha -1)/(\alpha p)$ as in Theorem $4.3$. Then, we have the inequalities
$$p < \alpha p < \frac{p\, n}{n - p\, r_\alpha}$$ for $n > 1$, and we can use the embedding theorems on radial functions
appearing in Lions' paper {\rm \cite{Li}}, see also {\rm \cite[Section 6.5.2]{Tri}}. We obtain, using also $(\ref{h1})$,
the chain of continuous inclusions
\begin{equation} \label{h22}
\mathcal H^{s,p}_r(a) \hookrightarrow H_r^{r_\alpha,\,p}(\Rn) \hookrightarrow
\hookrightarrow L^{\alpha p}_r (\Rn)\; ,
\end{equation}
in which ``$\hookrightarrow \hookrightarrow$" denotes compact embedding. In particular, there exists a constant
number $N$ depending on $s,p,\alpha$ such that
\begin{equation}
\| u \|_{L^{\alpha p}(\Rn)} \leq N\, \| u \|_{\mathcal H^{s,p}_r(a)} \;   \label{inclusion}
\end{equation}
for all $u \in \mathcal H^{s,p}_r(a)$.
\end{rem}

\smallskip

\noindent In the theorem below the constant number $N$ appearing therein is the fixed number exhibited in (\ref{inclusion}).

\smallskip

\begin{teo} \label{radi}
Let us assume that $\alpha>1$, $\beta>0$,
suppose that $a\in\gbeta_s(\mathbb R^n)$, and that $V(x,y)$ is spherically symmetric with respect to $x$.
Assume also that there exist functions $h\in \lp$ and
$g\in L^{\frac{\alpha p}{\alpha-1}}(\Rn)$ such that the following two inequalities hold:
\begin{equation}\label{cond2 no lineal radial}
|V(x,y)|\leq C(|h(x)|+|y|^{\alpha}),\quad\quad \left|\dfrac{\partial }{\partial y} V(x,y) \right|\leq C(|g(x)|+|y|^{\alpha-1})
\end{equation}
for some constant $C>0$. Then, if we choose
$$\epsilon > (2^p C^p N)^{1/(1-\alpha)} \mbox{ and } \rho_{\epsilon} = \epsilon/(2^p C^p N) - \epsilon^\alpha\; ,$$
there is a radial (i.e. spherically symmetric) solution $u\in\Hspr$ to the equation
\begin{equation}\label{eq_radial}
[1 + a(-\Delta)]^{s/2} u = V(\cdot, u)
\end{equation}
with $\|u\|_{L^{\alpha p}_r(\Rn)}\leq \epsilon\,$ whenever
$\|h\|_{L^{p}_r(\Rn)} < \rho_{\epsilon}$.
\end{teo}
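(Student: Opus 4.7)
The plan is to mimic the Schauder fixed point argument used in Theorem \ref{Teo ec no lineal}, but with the cutoff $\varphi$ removed and the crucial compactness input replaced by the Lions-type compact embedding $\mathcal H^{s,p}_r(a) \hookrightarrow \hookrightarrow L^{\alpha p}_r(\Rn)$ recorded in (\ref{h22}). The natural ambient space for the fixed point is now a closed ball of $L^{\alpha p}_r(\Rn)$, because the growth hypotheses on $V$ and $\partial_y V$ are tailored to this exponent via the H\"older duality $\frac{1}{p} = \frac{\alpha-1}{\alpha p} + \frac{1}{\alpha p}$.

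Concretely, given $u\in L^{\alpha p}_r(\Rn)$, the first inequality in (\ref{cond2 no lineal radial}) and Minkowski give $V(\cdot,u)\in\lp$ with $\|V(\cdot,u)\|_{\lp}\leq C(\|h\|_{\lp}+\|u\|_{L^{\alpha p}}^\alpha)$, while the spherical symmetry of $V$ in its first variable together with the radiality of $u$ make $V(\cdot,u)$ radial. Theorem \ref{Teorema ec lineal} and Corollary \ref{radlin} then produce a unique radial $w\in\Hspr$ with $\|w\|_{\Hspr}=\|V(\cdot,u)\|_{\lp}$. I set $\mathcal{R}(u)=w$ and let $B_\epsilon=\{u\in L^{\alpha p}_r(\Rn):\|u\|_{L^{\alpha p}_r}\leq\epsilon\}$. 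Combining the above estimate with the radial embedding constant $N$ from (\ref{inclusion}) and the inequality $(a+b)^p\leq 2^p(a^p+b^p)$, an arithmetic manipulation yields
\[
\|\mathcal R(u)\|_{L^{\alpha p}_r}^p\leq 2^pC^pN\bigl(\|h\|_{\lp}^p+\|u\|_{L^{\alpha p}}^{\alpha p}\bigr),
\]
and the prescribed choices of $\epsilon$ and $\rho_\epsilon$ are precisely what is needed to guarantee $\mathcal R(B_\epsilon)\subseteq B_\epsilon$ whenever $\|h\|_{L^p_r}<\rho_\epsilon$.

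For continuity of $\mathcal R$ on $B_\epsilon$ I would write $V(x,u_1)-V(x,u_2)=(u_1-u_2)\int_0^1\partial_yV(x,tu_1+(1-t)u_2)\,dt$ and apply the second bound in (\ref{cond2 no lineal radial}). H\"older's inequality with conjugate exponents $\frac{\alpha p}{\alpha-1}$ and $\alpha p$ gives
\[
\|V(\cdot,u_1)-V(\cdot,u_2)\|_{\lp}\leq C\bigl(\|g\|_{L^{\alpha p/(\alpha-1)}}+\|u_1\|_{L^{\alpha p}}^{\alpha-1}+\|u_2\|_{L^{\alpha p}}^{\alpha-1}\bigr)\|u_1-u_2\|_{L^{\alpha p}},
\]
which, composed with the isometry $\lp\to\Hspr$ from Theorem \ref{Teorema ec lineal} and the continuous inclusion (\ref{inclusion}), yields a local Lipschitz bound for $\mathcal R:B_\epsilon\to L^{\alpha p}_r(\Rn)$. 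Compactness then comes for free: $\mathcal R$ factors as $u\mapsto V(\cdot,u)\in\lp$ (bounded by the self-map step) followed by the solution operator $\lp\to\Hspr$ (continuous, even isometric) followed by $\Hspr\hookrightarrow\hookrightarrow L^{\alpha p}_r(\Rn)$ (compact by (\ref{h22})). Schauder's fixed point theorem applied to the continuous compact map $\mathcal R:B_\epsilon\to B_\epsilon$ delivers a radial $u\in\Hspr$ solving (\ref{eq_radial}) with $\|u\|_{L^{\alpha p}_r(\Rn)}\leq\epsilon$.

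The main obstacle I expect is the tight bookkeeping in the self-map step: the exponents $\alpha p$ and $\alpha p/(\alpha-1)$ imposed by the hypotheses on $h$ and $g$ have to match the H\"older pairing exactly, and the dependence of the admissible radius $\epsilon$ on the constants $C$, $N$, $p$, $\alpha$ must be tracked carefully in order to reproduce the explicit threshold $\rho_\epsilon=\epsilon/(2^pC^pN)-\epsilon^\alpha$. Everything else---radiality transfer through the linear equation, Lipschitz continuity of the Nemytskii map, and compactness of the composition---is a direct consequence of results already proved in the paper (Theorem \ref{Teorema ec lineal}, Corollary \ref{radlin}, and the radial compact embedding behind (\ref{h22})).
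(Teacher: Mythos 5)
Your argument is correct and follows the paper's own proof essentially line for line: same fixed-point set $B_\epsilon \subset L^{\alpha p}_r(\Rn)$, same use of Theorem \ref{Teorema ec lineal} and Corollary \ref{radlin} to build the radial solution operator, same H\"older-duality estimate with exponents $\alpha p/(\alpha-1)$ and $\alpha p$ for continuity, and the same Lions-type compact embedding from (\ref{h22}) to obtain compactness before invoking Schauder. The only cosmetic difference is that you phrase compactness as a factorization through $\Hspr$, whereas the paper spells it out by extracting a convergent subsequence; both are the same argument.
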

\begin{proof}
First of all we note that conditions (\ref{cond2 no lineal radial}) imply that if $u \in L^{\alpha p}_r(\Rn)$,
then the function $V( \cdot , u ) \in L^p_r(\Rn)$:
\[
\left|V(x,u(x))\right|^p
\leq C^p \left(|h(x)|+|u(x)|^{\alpha}\right)^p
\leq 2^p C^p (|h(x)|^p+|u(x)|^{\alpha p}) \; ,
\]
and therefore
\begin{equation}\label{ineq V_u}
\|V(\cdot , u) \|_{\lp}\leq 2^p C^p \left(\|h\|_{\lp}+\|u\|^\alpha_{L^{\alpha p}(\Rn)} \right) \; ,
\end{equation}
so that $V(\cdot , u) \in L^{p}(\Rn)$. Our hypotheses imply
that, moreover, $V(\cdot , u)$ is radial for $u\in L^{\alpha p}_r(\Rn)$.

This observation allows us to define the function $\mathcal
G:X_\epsilon\tien L^{\alpha p}_r(\Rn)$, in which $X_\epsilon$ is
 the ball $X_\epsilon=\{u\in
L^{\alpha p}_{r}(\Rn):\|u\|_{L^{\alpha p}(\Rn)}\leq \epsilon\}$,
as follows:
$$\mathcal G (u)=\tilde u \; , $$
in which $\tilde u$ is the unique solution to the linear equation
$$[1 + a(-\Delta)]^{s/2} \tilde u= V(\cdot , u)\; .$$
By Theorem \ref{Teorema ec lineal} and Corollary \ref{radlin}, we conclude that
$\mathcal{G}(u) = \tilde u \in \mathcal H^{s,p}_r(a)$ and that
$\| \tilde{u} \|_{\mathcal H^{s,p}_r(a)} = \| V( \cdot , u) \|_{\lp}$.
We notice that the map $\mathcal{G}$ is well defined
since inclusions (\ref{h22}) imply that $\tilde{u} \in L^{\alpha p}_r (\Rn)$.

Now we claim that there exists $\epsilon > 0$ such that $\mathcal
G:X_\epsilon\tien X_\epsilon\,$. Indeed, the continuous inclusion
(\ref{inclusion}) implies that
 \begin{equation}\label{ineq Gu Vu}
 \|\mathcal G(u)\|_{L^{\alpha p}(\Rn)} \leq N\,  \|\mathcal G(u)\|_{\mathcal H^{s,p}_r(a)} =
 N\, \|V(\cdot , u) \|_{\lp} \; ,
 \end{equation}
in which we have used Theorem \ref{Teorema ec lineal}, and
therefore by inequality (\ref{ineq V_u}) we obtain
 \begin{equation}
 \|\mathcal G(u)\|_{L^{\alpha p}(\Rn)} \leq 2^p C^p N (\|h\|_{\lp}+\|u\|^\alpha_{L^{\alpha p}(\Rn)})
 \leq 2^p C^p N (\|h\|_{\lp}+\epsilon^\alpha) \; .
 \end{equation}
Thus, if we choose $\epsilon > (2^p C^p N)^{1/(1-\alpha)}$ as in the enunciate of the theorem, and
we assume that $\|h\|_{\lp} < \rho_\epsilon = \epsilon/(2^p C^p N) - \epsilon^\alpha$, we
obtain $\|\mathcal G(u)\|_{L^{\alpha p}(\Rn)} \leq \epsilon$ and
so $\mathcal G:X_\epsilon\tien X_\epsilon\,$, as claimed.

\smallskip \smallskip

As in Theorem \ref{Teo ec no lineal}, we plan to apply the Schauder fixed point
theorem to the map $\mathcal{G}$. First, we show that the function
$\mathcal G$ is continuous:

Let us consider a sequence $\{u_n\}\subset X_\epsilon$ such that
$u_n\tien u$ in $L^{\alpha p}(\Rn)$. We set $\mathcal{G}(u_n) =
\tilde{u}_n$ and $\mathcal{G}(u) = \tilde{u}$. We estimate
$\|\tilde u_n-\tilde{u}\|_{L^{\alpha p}(\Rn)}$ as follows:

The continuous inclusions (\ref{h22}) yield
\begin{equation}
\label{aux3} \|\tilde u_n-\tilde{u}\|^p_{L^{\alpha p}(\Rn)}\leq C
\|\tilde u_n-\tilde u\|^p_{\mathcal H^{s,p}_r(a)} = C \|V(\cdot ,
u_n) - V(\cdot , u)\|_{\lp} \; ,
\end{equation}
and we can estimate this difference using the fundamental theorem
of calculus and hypotheses (\ref{cond2 no lineal radial}):
\begin{eqnarray*}
&&\left|V(x , u_n(x))-V(x,u(x)) \right|\\
&=&\left|\int_0^1\dfrac{d}{dt}\left[V(x,t u_n(x)+(1-t)u(x))\right]dt\right|\\
&=&\left|(u_n(x)-u(x))\int_0^1\left(\dfrac{\partial}{\partial y}\left[V(x,t u_n(x)+(1-t)u(x))\right]\right)dt\right|\\
&\leq& C\left|u_n(x)-u(x) \right| \int_0^1 \left(|g(x)|+|tu_n(x) + (1-t) u(x)|^{\alpha-1}\right)dt\\
&\leq&C_1 \left|u_n(x)-u(x)|\left(|g(x)|+|u_n(x)|^{\alpha-1}+|u(x)|^{\alpha-1}\right)\right|\; ,
\end{eqnarray*}
and therefore by H\"older inequality we have
\begin{eqnarray*}
\lefteqn{\|V(\cdot , {u_n}) - V(\cdot , u) \|^p_{L^p(\Rn)}
\leq}\\&& C_1 \left( \int |u_n(x)-u(x)|^{\alpha p}
\right)^{1/\alpha} \left( \int
[\,|g(x)|+|u_n(x)|^{\alpha-1}+|u(x)|^{\alpha-1}]^{\alpha p/(\alpha
-1)}\right)^{(\alpha-1)/(\alpha p)} \; .
\end{eqnarray*}
It follows that
\begin{eqnarray*}
\lefteqn{\|V(\cdot , {u_n}) - V(\cdot , u) \|^p_{L^p(\Rn)} \leq}\\
&& C_2 \| u_n - u \|_{L^{\alpha p}(\Rn)}^p \left[ \int
|g(x)|^{\alpha p/(\alpha -1)} dx + \int ( | u_n(x)|^{\alpha p} +
|u(x)|^{\alpha p} )dx \right]^{(\alpha-1)/(\alpha p)} \; .
\end{eqnarray*}

Since $u_n\tien u$  in $L^{\alpha p}(\Rn)$ and $g\in
L^{\frac{\alpha p}{\alpha-1}}(\Rn)$ by hypothesis, inequality
(\ref{aux3}) implies that $\tilde u_n\tien\tilde u$ in $L^{\alpha
p}(\Rn)$, so that $\mathcal{G}$ is continuous, as claimed.

Now we prove that $\mathcal{G}$ is compact. We use once more the
inclusions (\ref{h22}). Let $(u_n)_{n \in \mathbb{N}} \subset
X_\epsilon$ be a bounded sequence, so that $\| u_n \|_{L^{\alpha
p}(\Rn)} \leq M$ for all $n \in \mathbb{N}$. We have,
\begin{eqnarray*}
\| \mathcal{G} (u_n) \|_{H_r^{r_\alpha,\,p}(\Rn)} & \leq & C \|
\mathcal{G} (u_n) \|_{\mathcal{H}_r^{s,p}(a)} \\
 & = & C \| V( \cdot , u_n) \|_{\lp} \\
& \leq & C C_1 \left(\|h\|_{\lp}+\|u_n \|^\alpha_{L^{\alpha
p}(\Rn)} \right) \; ,
\end{eqnarray*}
in which we have used (\ref{ineq V_u}) in the last inequality.
Thus, the sequence $( \mathcal{G} (u_n) )_{n \in \mathbb{N}}$ is
bounded in $H_r^{r_\alpha,\,p}(\Rn)$. Since the last continuous
inclusion in (\ref{h22}) is compact, we conclude that this
sequence has a convergent subsequence in $X_\epsilon$ with respect
to the topology of $L^{\alpha p}(\Rn)$.

In conclusion, the map $\mathcal G:X_\epsilon\tien X_\epsilon$ is
compact and continuous. By Schauder's fixed point theorem, we have
that $\mathcal{G}$ possesses a fixed point. By Theorem
\ref{Teorema ec lineal} this fixed point belongs to $\Hspr$, and
therefore, there exists a radial solution to the nonlinear
equation (\ref{eq_radial}) in $\Hspr$.
\end{proof}

\section{An Example: the fractional Laplace operator}

In this section we are interested in studying the existence of
radial solutions to the non-linear equation (\ref{eq_radial}) if
$a_m(-\Delta) = (-\Delta + m^2)^{\gamma/2}$, $\,0 < \gamma < 1$. The $\gamma=1$ case of this operator
appears for instance in Dubinskii's paper \cite{Du} and in the more recent works \cite{va,FJL,L}, in which they consider
(evolutionary versions of) equations of the form
$$
\sqrt{(-\Delta) + m^2 }\, u + f(u) = 0 \; .
$$
In addition to these equations, we mention as motivational examples
the fractional non-linear Schr\"odinger equation
\begin{equation} \label{fls}
\left[ (- \Delta + m^2)^{s} u - m^{2s} \right] u +\mu u = |u|^{p-2} u \;  ,
\end{equation}
considered in \cite{va2} under suitable assumptions on the real parameters $m,\mu,s,p$. Also important for us are the Benjamin-Ono equation
\begin{equation} \label{bo}
(- \Delta)^{\gamma/2} u = u^2 - u \;  ,
\end{equation}
and the (deformed) Peierls-Nabarro equation
\begin{equation} \label{pn}
(- \Delta)^{\gamma /2} u = - \kappa u + d(|x|)\,\sin (u) \;  ,
\end{equation}
in which for simplicity we assume that $d$ is a continuous function of compact support.
In addition to the papers just cited, we mention \cite{CS1,CS2} by Cabr\'e and Sire, and \cite{Sir} by Sire and
Valdinoci, on equations of the form  $(-\Delta)^{\gamma} u = f(u)$,
and the papers \cite{FL,FLS} by Frank and his coworkers on scalar and vector equations of the form
$(- \Delta)^{\gamma} u + u - |u|^p\, u = 0$.
We also highlight the classical paper \cite{AT}, in which the authors study Equation (\ref{bo}) for $\gamma = 1$,
and explain its relationship with the evolutionary version of the Benjamin-Ono equation of soliton theory, and the
numerical investigation of the fractional non-linear Schr\"odinger equation carried out in \cite{KSM}.

\smallskip

Now let us recall that Lemma \ref{xxx} tells us that $a_m(t)  = (|t| + m^2)^{\gamma/2}$, $0<\gamma<1$ and $m \neq 0$,
belongs to the class $\mathcal{G}^\gamma_s$ for $s > 4 n/\gamma\,$; therefore, it makes sense to apply Theorem \ref{radi}
to equations of the form
\begin{equation} \label{gp}
\left[\,1 + (- \Delta + m^2)^{\gamma /2}\,\right]^{s/2}\,u = V(\cdot , u) \; , \quad \quad m \neq 0 \; ,
\end{equation}
for an appropriate function $V(\cdot , u)$. {\em We note that the condition $s > 4 n/\gamma$ implies that we cannot
set $s=2$ in Equation {\rm (\ref{gp})} for $0<\gamma<1$}. We obtain the following proposition, in which we use notation introduced in Remark \ref{4.6}:

\begin{prop} \label{fin}
Let us assume that the right hand side of $(\ref{gp})$ satisfies conditions
\begin{equation}\label{cond2 no lineal radial 2}
|V(x,y)|\leq C(|h(x)|+|y|^{\alpha})\, , \quad\quad \left|\dfrac{\partial }{\partial y} V(x,y) \right|\leq C(|g(x)|+|y|^{\alpha-1})
\end{equation}
for a fixed $\alpha > 1$. Then, if we choose
$$ \epsilon > (2^p C^p N)^{1/(1-\alpha)} \quad \mbox{ and } \quad \rho_{\epsilon} = \epsilon/(2^p C^p N) - \epsilon^\alpha \; ,$$
there is a radial solution $u\in\mathcal{H}^{s,p}_r(a_m)$ to Equation $(\ref{gp})$ whenever $\|h\|_{L^{p}_r(\Rn)} < \rho_{\epsilon}$.
\end{prop}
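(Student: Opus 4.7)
The plan is to recognize Proposition \ref{fin} as a direct corollary of Theorem \ref{radi}, obtained by specializing to the symbol $a_m(t) = (|t| + m^2)^{\gamma/2}$. The argument reduces to verifying, one by one, that the hypotheses of the general radial existence theorem are met for this particular symbol.

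The first step is to place $a_m$ in the admissible class. Since $0 < \gamma < 1$, $m \neq 0$, and we are working under the standing assumption $s > 4n/\gamma$, Lemma \ref{xxx} gives $a_m \in \mathcal{G}^{\gamma}_s(\mathbb{R}^n)$. Thus Theorem \ref{radi} can be invoked with the parameter choice $\beta = \gamma > 0$ and the same $s$, since $\beta s > 4n$ follows automatically. At this point the operator $[1 + a_m(-\Delta)]^{s/2}$ is rigorously defined as the inverse of the Fourier multiplier $T_s$ on $\mathcal{H}^{s,p}(a_m) \subseteq L^p(\mathbb{R}^n)$, and Equation (\ref{gp}) is literally Equation (\ref{eq_radial}) for this choice of symbol.

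Next I match the nonlinearity hypotheses. The growth conditions (\ref{cond2 no lineal radial 2}) on $V$ and $\partial_y V$ are identical to the hypotheses (\ref{cond2 no lineal radial}) of Theorem \ref{radi}, with the same parameters $\alpha > 1$ and $C > 0$, and with the same integrability assumptions $h \in L^p_r(\mathbb{R}^n)$ and $g \in L^{\alpha p/(\alpha - 1)}(\mathbb{R}^n)$ (together with spherical symmetry of $V$ in its first argument) carried over implicitly from the general framework of Section~4. The constant $N$ in the statement of the proposition is precisely the embedding constant from (\ref{inclusion}) applied to $\mathcal{H}^{s,p}_r(a_m) \hookrightarrow L^{\alpha p}_r(\mathbb{R}^n)$, which is well-defined as part of the chain (\ref{h22}) in Remark \ref{4.6}, since $\beta = \gamma$ and the symbol lies in $\mathcal{G}^{\gamma}_s$.

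Finally, with all the hypotheses verified, I apply Theorem \ref{radi} with the same threshold $\epsilon > (2^p C^p N)^{1/(1-\alpha)}$ and the same smallness condition $\|h\|_{L^p_r(\mathbb{R}^n)} < \rho_\epsilon = \epsilon/(2^p C^p N) - \epsilon^\alpha$. The conclusion is a radial solution $u \in \mathcal{H}^{s,p}_r(a_m)$ of (\ref{gp}) (automatically satisfying the bound $\|u\|_{L^{\alpha p}_r(\mathbb{R}^n)} \leq \epsilon$, though this refinement is not reiterated in the statement of Proposition~\ref{fin}). There is no real technical obstacle in the proof; the only step requiring mild care is the bookkeeping that confirms Lemma \ref{xxx} indeed supplies a symbol in the class $\mathcal{G}^{\beta}_s$ with $\beta = \gamma > 0$ and $\beta s > 4n$, so that both the functional calculus of Section~3 and the radial compact embedding inherited from \cite{Li} are available.
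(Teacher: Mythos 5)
Your proposal is correct and takes essentially the same route as the paper: the paper states Proposition~\ref{fin} as an immediate consequence of Lemma~\ref{xxx} (which places $a_m$ in $\mathcal{G}^\gamma_s$ for $s>4n/\gamma$) and Theorem~\ref{radi}, with $\beta=\gamma$, and offers no further argument. Your verification that $\beta s>4n$ holds automatically, that the growth hypotheses on $V$ and $\partial_y V$ match those of Theorem~\ref{radi} verbatim, and that $N$ is the embedding constant from (\ref{inclusion}), is exactly the bookkeeping the paper leaves implicit.
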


\noindent
Inclusions (\ref{h22}) imply that the solution $u$ belongs to the radial fractional
Sobolev space $H^{r_\alpha , \,p}_r(\Rn)$, in which $r_\alpha = n(\alpha -1)/(\alpha p)$, and to $L^{\alpha\, p}_r(\Rn)$.

\smallskip

Proposition \ref{fin} applies, in particular, to ``perturbed" equations which we now introduce, motivated by versions of
the (focusing) pseudo-relativistic Allen-Cahn type Equation
\begin{equation} \label{ac}
(- \Delta + m^2)^{\gamma/2} u = -u + u^3 \;
\end{equation}
and (\ref{fls}), see for instance \cite{MR,WW1,WW2}. We use the same notation as in the previous proposition,
and for simplicity we ``perturb" via continuous radial functions of compact support.

\begin{cor}
There exists a spherically symmetric solution $u\in\mathcal{H}^{s,p}_r(a_m)$ to the equations
\begin{equation*}
\left[ \,1 + (-\Delta + m^2)^{\gamma/2}\,\right]^{s/2} u = \kappa\, u^3 + \rho(|x|) \; , \quad \quad m \neq 0 \; ,
\end{equation*}
and
\begin{equation*}
\left[ \,1 + (-\Delta + m^2)^{\gamma/2}\,\right]^{s/2} u = |u|^\beta u + \rho(|x|) \; , \quad \quad m \neq 0 \; ,
\end{equation*}
in which $\rho$ is a continuous radial function of compact support such that $\|\rho\|_{L^{p}_r(\Rn)} < \rho_{\epsilon}$.
\end{cor}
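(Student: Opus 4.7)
The plan is to show that each of the two equations in the corollary is a direct specialization of Proposition \ref{fin}, applied to the symbol $a_m(t) = (|t|+m^2)^{\gamma/2}$, which belongs to $\mathcal{G}^\gamma_s$ by Lemma \ref{xxx}. Thus the whole argument reduces to identifying the nonlinearity $V(x,y)$ in each case and checking that the growth hypotheses \eqref{cond2 no lineal radial 2} hold with an appropriate choice of $\alpha$, $h$ and $g$.

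For the first equation I would set $V(x,y) = \kappa y^3 + \rho(|x|)$, which is radial in $x$ by assumption on $\rho$. With $\alpha = 3$, $h(x) = \rho(|x|)$ and $g \equiv 0$ the bounds
\[
|V(x,y)| \leq |\rho(|x|)| + |\kappa|\,|y|^3, \qquad |\partial_y V(x,y)| = 3|\kappa|\,y^2 \leq C|y|^{\alpha-1}
\]
are immediate, and since $\rho$ is continuous with compact support we have $h \in L^p_r(\mathbb R^n)$ and $g \equiv 0 \in L^{\alpha p/(\alpha-1)}(\mathbb R^n)$ trivially. For the second equation the same recipe works with $V(x,y) = |y|^{\beta} y + \rho(|x|)$, $\alpha = \beta+1$, $h = \rho$ and $g \equiv 0$: the derivative is $(\beta+1)|y|^{\beta}$, which is exactly of order $|y|^{\alpha-1}$, and $\alpha > 1$ is built into the tacit assumption $\beta > 0$.

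Once these hypotheses are in place, Proposition \ref{fin} applies verbatim with the very $\epsilon$ and $\rho_\epsilon$ already fixed in the statement of the corollary, producing a spherically symmetric solution $u \in \mathcal{H}^{s,p}_r(a_m)$ whenever $\|\rho\|_{L^p_r(\mathbb R^n)} < \rho_\epsilon$. There is no real obstacle to overcome here: both nonlinearities are of the clean form ``radial source term plus polynomial in $y$'', so the growth bounds match \eqref{cond2 no lineal radial 2} mechanically, and the smallness assumption on the inhomogeneous term is precisely the one transferred from Proposition \ref{fin}. If anything, the only point worth emphasizing is that the compact support of $\rho$ automatically furnishes integrability of $h$ in every Lebesgue class, so no additional hypothesis on $\rho$ beyond continuity, radiality and smallness of norm is needed.
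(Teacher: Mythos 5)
Your proposal is correct and follows exactly the route the paper intends: the corollary is stated as a direct specialization of Proposition \ref{fin}, and you simply verify the growth hypotheses \eqref{cond2 no lineal radial 2} for $V(x,y)=\kappa y^3+\rho(|x|)$ (with $\alpha=3$) and $V(x,y)=|y|^\beta y+\rho(|x|)$ (with $\alpha=\beta+1$), taking $h=\rho$ and $g\equiv 0$. Your observation that compact support and continuity put $\rho$ in every $L^p_r$, and that the tacit assumption $\beta>0$ gives both $\alpha>1$ and $C^1$ differentiability of $|y|^\beta y$, covers the only points that needed checking.
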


\begin{rem} \label{5.1} ~

\begin{enumerate}
\item Existence and regularity of solutions to the standard ``ground state equation"
$(- \Delta) u + u - u^p = 0$ ---special cases of which we can recover from {\rm (\ref{ac})} and {\rm (\ref{bo})}---
is discussed carefully in {\rm \cite[Chapter B]{Tao}}.
\item The $m = 0$ case of {\rm (\ref{ac})} is different to the well-known (defocusing) fractional Allen-Cahn equation
\begin{equation} \label{acc}
(- \Delta)^{\gamma/2} u =  u - u^3 \; ,
\end{equation}
already in the classical $\gamma=2$ case:
The classical Allen-Cahn equation is the Euler-Lagrange equation for a positive
definite Lagrangian, see for instance {\rm \cite{WW1,WW2}} (and {\rm \cite[Chapter 5]{BV}} for a Lagrangian formulation
of the full fractional equation {\rm (\ref{acc})}), while Equation {\rm (\ref{ac})} with $\gamma = 2$ arises from the
indefinite Lagrangian functional
$$ \int \frac{1}{2} |\nabla u|^2 + \frac{1}{2} u^2 - \frac{1}{4} u^4 \; . $$
\end{enumerate}
\end{rem}

\smallskip

\smallskip

We finish this paper by observing that $a_0(t)  = |t|^{\gamma/2}$, $0<\gamma<1$, falls outside the class of allowable symbols
of Definition \ref{2.1}, because of the behavior of its derivatives near zero. We will consider this important case elsewhere.
{\em However}, we can still study equations depending on $a_0$ by using the ``$L^2\,$" theory
developed in \cite{G-P-R,G-P-R5} for which only ``ellipticity" of symbols (essentially, assumptions $G_1$ and $G_2$ appearing in Definition
\ref{2.1}) is assumed:

\begin{teo} \label{primero}
We consider $V : \mathbb{R}^n \times \mathbb{R} \rightarrow \mathbb{R}$ such that $V(x,y) = V(|x|,y)$.
Let us assume that $\delta > 0$ is a constant number and take $\gamma > \frac{n}{2}\left( \frac{\delta}{1+\delta} \right)$.
We assume that $\delta$ is small enough so that $0 < \gamma < 1$.
Suppose also that there exist functions $h \in L^2({\mathbb R}^n)$ and $g \in L^{\frac{2(1+\delta)}{\delta}}({\mathbb R}^n)$
such that:
\begin{equation} \label{conditions}
|V(x,y)| \leq C(|h(x)|+|y|^{1+\delta}), \quad
\left| \frac{\partial}{\partial\,y} V(x,y) \right| \leq C\left(|g(x)|+|y|^{\delta}\right)
\end{equation}
for some constant $C > 0\,$. Then, there exist $0 < \epsilon < 1$ and $0 < \rho_\epsilon < 1$
such that there is a radial solution
$u \in \mathcal{H}^{2,2}_r(a)$, where $a(t) = (1/\kappa) t^{\gamma/2}$, to the equation
\begin{equation}
\frac{1}{\kappa}\,(-\Delta)^{\gamma/2} u = - u + V(x,u) \; , \quad \quad \quad \kappa > 0 \; ,
\end{equation}
whenever $\| h \|_{L^2({\mathbb R}^n)} < \rho_\epsilon\,$. Moreover, $u$ belongs to $L^{2(1+\delta)}_r(\mathbb{R}^n)$ and
$\| u \|_{ L^{2(1+\delta)}_r(\mathbb{R}^n)} \leq \epsilon\,$.
\end{teo}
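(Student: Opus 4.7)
The plan is to mirror the proof of Theorem \ref{radi} in the $L^2$ setting, where the obstruction to fitting $a_0(t)=(1/\kappa)t^{\gamma/2}$ into the class $\gbeta_s$ --- namely, the singular behaviour of its derivatives at the origin --- becomes harmless, because Plancherel's theorem promotes every bounded measurable function to an $L^2$-multiplier. First, I rewrite the equation as $[1+a(-\Delta)]u=V(\cdot,u)$ with $a(t)=(1/\kappa)t^{\gamma/2}$ (so $s=2$, $p=2$), and I interpret $\mathcal{H}^{2,2}(a)$ and $\mathcal{H}^{2,2}_r(a)$ via Definition \ref{defi D(A)2} with this $a$; since $a(|\xi|^2)=(1/\kappa)|\xi|^\gamma$ is a nonnegative, polynomially bounded tempered function, these definitions remain meaningful even though $a\notin\gbeta_s$.

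The second step is to identify $\mathcal{H}^{2,2}(a)$ with the fractional Sobolev space $H^\gamma(\Rn)$. Indeed, the weight $1+(1/\kappa)|\xi|^\gamma$ is equivalent above and below to $(1+|\xi|^2)^{\gamma/2}$, so by Plancherel the two norms on $\mathcal{H}^{2,2}(a)$ and $H^\gamma(\Rn)$ are equivalent. Similarly, the bounded function $(1+(1/\kappa)|\xi|^\gamma)^{-1}$ is trivially an $L^2$-multiplier, so the linear theory of Theorem \ref{Teorema ec lineal} and the symmetry preservation of Corollary \ref{radlin} go through: for every $g\in L^2(\Rn)$ there is a unique $\tilde u\in\mathcal{H}^{2,2}(a)$ with $[1+a(-\Delta)]\tilde u=g$, and if $g$ is radial so is $\tilde u$.

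Third, setting $\alpha=1+\delta$ and $p=2$ one has $r_\alpha=n\delta/(2(1+\delta))$, and the hypothesis $\gamma>\tfrac{n}{2}\cdot\tfrac{\delta}{1+\delta}$ gives $\gamma>r_\alpha$. Combining this with the identification above and with Lions' radial compactness theorem \cite{Li} yields the chain
\[
\mathcal{H}^{2,2}_r(a)\;\cong\;H^\gamma_r(\Rn)\;\hookrightarrow\;H^{r_\alpha}_r(\Rn)\;\hookrightarrow\hookrightarrow\;L^{2(1+\delta)}_r(\Rn),
\]
which is exactly the analog of (\ref{h22}) in the present setting and provides a constant $N$ with $\|u\|_{L^{2(1+\delta)}(\Rn)}\leq N\|u\|_{\mathcal{H}^{2,2}_r(a)}$.

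With these ingredients in hand, I run the Schauder argument of Theorem \ref{radi} verbatim. On $X_\epsilon=\{u\in L^{2(1+\delta)}_r(\Rn):\|u\|_{L^{2(1+\delta)}}\leq\epsilon\}$ define $\mathcal{G}(u)=\tilde u$ via $[1+a(-\Delta)]\tilde u=V(\cdot,u)$; the first inequality in (\ref{conditions}) gives $\|V(\cdot,u)\|_{L^2}\leq 2^2 C^2(\|h\|_{L^2}+\|u\|_{L^{2(1+\delta)}}^{1+\delta})$, so the explicit choices of $\epsilon$ and $\rho_\epsilon$ force $\mathcal{G}(X_\epsilon)\subset X_\epsilon$. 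Continuity and compactness of $\mathcal{G}$ follow from the second inequality in (\ref{conditions}) --- via the same mean-value plus H\"older estimate used in the proof of Theorem \ref{radi}, where the assumption $g\in L^{2(1+\delta)/\delta}(\Rn)$ is precisely the integrability needed to control the resulting H\"older integral --- together with the compact embedding above. The only genuinely substantive step, and the place where a sanity check is warranted, is the identification $\mathcal{H}^{2,2}_r(a)\cong H^\gamma_r(\Rn)$: once it is established, Schauder's theorem produces a fixed point in $\Hspr$ with the stated $L^{2(1+\delta)}_r$-bound.
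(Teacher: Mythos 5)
Your proof is correct, but it takes a genuinely different route from the paper. The paper's proof is a one-paragraph reduction: it cites Theorem~3.5 of \cite{G-P-R}, verifies that $f(t)=(-t)^{\gamma/2}/\kappa$ satisfies the (weaker) $L^2$-ellipticity conditions defining $\mathcal{G}^\gamma$ in that paper (essentially $G_1$ and $G_2$ only, with no derivative bound $G_3$), checks that $\alpha=1+\delta$ gives $\gamma>n(\alpha-1)/(2\alpha)$, and observes that $U(x,y)=-y+V(x,y)$ satisfies the growth inequalities (21) of \cite{G-P-R}. You instead re-derive the result inside the present paper's framework by adapting the Schauder argument of Theorem \ref{radi} to $p=2$, $s=2$; the pivotal step is your identification $\mathcal{H}^{2,2}(a)\cong H^\gamma(\Rn)$ via Plancherel and the two-sided comparability of $1+(1/\kappa)|\xi|^\gamma$ with $(1+|\xi|^2)^{\gamma/2}$, which replaces both the multiplier machinery (Proposition \ref{mfs multiplicador}) and the embedding Theorem \ref{embed on Hs}. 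That is exactly the observation --- harmless singularity of $a_0$ at the origin in the $L^2$ picture --- that the paper encapsulates by invoking the external $L^2$ theorem, and your explicit chain makes transparent that the condition $\gamma>\tfrac{n}{2}\cdot\tfrac{\delta}{1+\delta}$ is precisely what makes $2(1+\delta)$ strictly subcritical for $H^\gamma_r$, so Lions' compactness applies. The trade-off: your argument is self-contained within this paper's notation and makes the mechanism visible, whereas the paper's is shorter but requires the reader to consult \cite{G-P-R}. One stylistic caution: in your chain $H^\gamma_r\hookrightarrow H^{r_\alpha}_r\hookrightarrow\hookrightarrow L^{2(1+\delta)}_r$, the final arrow sits at the critical exponent for $H^{r_\alpha}$; it is cleaner to state the compactness directly as $H^\gamma_r\hookrightarrow\hookrightarrow L^{2(1+\delta)}_r$ using $\gamma>r_\alpha$, which is where the strict inequality is actually consumed.
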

\begin{proof}
We apply Theorem 3.5 from \cite{G-P-R}. Keeping the notation used therein, we set $f(t) = (-t)^{\gamma/2}/\kappa\,$. Then, we see that the function
$t \mapsto f(-t^2) = |t|^\gamma/\kappa$ is non-negative and we also have
\[
\frac{1}{\kappa}\,(1 + |\xi|^2)^{\gamma/2} \leq 2^{\gamma/2}\,\frac{1}{\kappa}\,|\xi|^\gamma = 2^{\gamma/2} f(-|\xi|^2)
\]
for $|\xi| \geq 1$. We conclude that the symbol $f$ belongs to $\mathcal{G}^\gamma$. Now we define $\alpha = 1+\delta$ and we obtain
$\gamma > n(\alpha-1)/(2\alpha)$. Finally, conditions (\ref{conditions}) imply that the function $U(x,y) = - y + V(x,y)$ satisfies
inequalities (21) in \cite{G-P-R}.
\end{proof}

As a first corollary we prove existence of radial solutions to Equation (\ref{pn}): we start with $0 < \gamma < 1$
and fix $\delta > 0$ small enough so that $\gamma > \frac{n}{2}\left( \frac{\delta}{1+\delta} \right)$. We take
$V(x,y) = (1/\kappa)\,d(|x|)\,\sin(y)$. This function $V$ satisfies (\ref{conditions}) for a continuous function $d$ of compact support if we choose
$h=g=d$ and  $\| h \|_{L^2({\mathbb R}^n)} < \rho_\epsilon$. Lastly, we note that we can also prove existence of radial solutions to
Equations (\ref{fls}), (\ref{bo}), and to the $m=0$ case of (\ref{ac}):

\begin{prop}
Let $a(t) = t^{\gamma/2}$. We have:
\begin{itemize}
\item If $1/3 < \gamma < 1$ and $n = 1$ or, if $\,2/3 < \gamma < 1$ and $n = 2$, there exist a radial solution
$u \in \mathcal{H}^{2,2}(a)$ to the equation $(- \Delta)^{\gamma/2} u =  -u + u^3$.

\item If $1/4 < \gamma < 1$ and $n = 1$ or, if $\,1/2 < \gamma < 1$ and $n = 2$ or, if $\,3/4 < \gamma < 1$ and $n = 3$, there exist a radial solution
$u \in \mathcal{H}^{2,2}(a)$ to Equation $(\ref{bo})$.

\item If $\beta/2(\beta+1) < \gamma < 1$ and $n = 1$ or, if $\,\beta/(\beta+1) < \gamma < 1$ and $n = 2$, there exist a radial solution
$u \in \mathcal{H}^{2,2}(a)$ to Equation $(\ref{fls})$.
\end{itemize}
\end{prop}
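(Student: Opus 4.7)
The plan is a direct application of Theorem \ref{primero} to each of the three equations, recasting each in the normal form $\frac{1}{\kappa}(-\Delta)^{\gamma/2} u = -u + V(x,u)$ required by the theorem, and in each case taking the trivial choice $h = g \equiv 0$, which makes the smallness hypothesis $\|h\|_{L^2} < \rho_\epsilon$ automatic and leaves only the algebraic constraint $\frac{n}{2}\cdot\frac{\delta}{1+\delta} < \gamma < 1$ to check.

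For the first item I would set $\kappa = 1$ and $V(x,y) = y^3$, so that conditions (\ref{conditions}) hold with $\delta = 2$. The constraint $\gamma > \frac{n}{2}\cdot\frac{\delta}{1+\delta} = n/3$, combined with $\gamma < 1$, forces $n \in \{1,2\}$ and yields exactly the stated intervals $(1/3, 1)$ and $(2/3, 1)$. For the Benjamin-Ono equation I would rewrite (\ref{bo}) as $(-\Delta)^{\gamma/2} u = -u + u^2$, so that $V(x,y) = y^2$, $\delta = 1$, and the constraint becomes $\gamma > n/4$; this is compatible with $\gamma < 1$ precisely for $n \in \{1,2,3\}$, giving the three sub-cases in the statement.

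For the fractional Schr\"odinger case I would set $m = 0$ in (\ref{fls}), obtaining $(-\Delta)^{\gamma/2} u + \mu u = |u|^{p-2} u$, and rearrange as $\frac{1}{\mu}(-\Delta)^{\gamma/2} u = -u + \mu^{-1}|u|^{\beta} u$ with $\beta = p-2$. Taking $\kappa = \mu$ and $V(x,y) = \mu^{-1}|y|^\beta y$ matches the hypotheses of Theorem \ref{primero} with $\delta = \beta$, and the resulting constraint $\gamma > \frac{n\beta}{2(\beta+1)}$ is compatible with $\gamma < 1$ iff $(n-2)\beta < 2$, reproducing exactly the stated intervals for $n = 1, 2$.

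There is no substantive obstacle: since each equation has a pure polynomial nonlinearity with no inhomogeneous term, the verification of (\ref{conditions}) is immediate, and the only work is matching $\delta$ to the degree of the nonlinearity and reading off the admissible $(\gamma, n)$ pairs. The only interpretive point worth flagging is the specialization of (\ref{fls}) to $m = 0$ together with the rescaling by $\mu$ needed to produce the linear term $-u$ on the right-hand side in the normal form of Theorem \ref{primero}.
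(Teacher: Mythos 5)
Your proposal is correct and matches the paper's (implicit) argument: the proposition follows by applying Theorem \ref{primero} directly to each equation, choosing the degree of the polynomial nonlinearity to fix $\delta$ (namely $\delta=2$, $\delta=1$, $\delta=\beta$ respectively), taking $h=g\equiv 0$ so that the smallness hypothesis is vacuous, and reading off the admissible range of $\gamma$ from the inequality $\gamma > \tfrac{n}{2}\cdot\tfrac{\delta}{1+\delta}$ together with $\gamma<1$. The rescaling by $\mu$ in the $m=0$ fractional Schr\"odinger case is exactly what the theorem's $\kappa$-parameter is designed to absorb, so no new idea is needed there either.
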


\paragraph{\bf Acknowledgements:} We thank M. Kowalczyk for information on the Allen-Cahn equation which led us to Remark \ref{5.1},
and the referee for his/her insightful comments.
M.B. has been partially supported by MECESUP
via the project USA0711; H.P. and E.G.R. have been partially
supported by the FONDECYT operating grants \# 1170571  and \# 1161691
respectively.

\bibliographystyle{amsalpha}

\end{document}